\newtheorem{theorem}{Theorem}[section]
\newtheorem{corollary}{Corollary}[theorem]
\newtheorem{lemma}[theorem]{Lemma}
\newtheorem{definition}{Definition}
\journal{ArXiv}
\begin{document}

\begin{frontmatter}

\title{Transforming the Erd\H{o}s-Kac theorem}

\author[inst1]{Kimihiro Noguchi}

\affiliation[inst1]{organization={Department of Mathematics, Western Washington University},
            addressline={516 High Street}, 
            city={Bellingham},
            postcode={98225}, 
            state={Washington},
            country={USA}\\
            Email Address: Kimihiro.Noguchi@wwu.edu\\
            ORCiD: 0000-0002-5904-9568}

\begin{abstract}
Transforming the Erd\H{o}s-Kac theorem provides more flexibility in how the theorem can be utilized as an interval estimate for the prime omega function, which counts the number of distinct prime divisors. Here, we consider a direct transformation by the delta method. Then, we demonstrate that the square-root and three-quarters power asymptotically achieve variance stabilization and an optimal width, respectively. Furthermore, by adjusting the denominator of the theorem, we derive the score interval estimate. To make these interval estimates reliable for small positive integers, we examine performances of various interval estimates for the prime omega function using fuzzy coverage probabilities. The results indicate that the score interval estimate performs well even for small positive integers after training the mean and standard deviation using the prime omega function. Moreover, the Poisson interval estimate is relatively reliable with or without training. Additional theoretical results on the Erd\H{o}s-Pomerance theorem are also provided.
\end{abstract}

\begin{keyword}
Box-Cox transformation \sep Fuzzy interval estimate \sep Poisson interval estimate \sep Score interval estimate \sep Variance-stabilizing transformation
\MSC 11N40 \sep 62E20 \sep 60F05
\end{keyword}
\end{frontmatter}

\section{Introduction}
\label{sec:introduction}
One of the landmark results in probabilistic number theory is that of Erd\H{o}s and Kac \citep{erdos1939gaussian, erdos1940gaussian}, whose result shows that certain additive functions follow the Gaussian law (i.e., the normal distribution). As a consequence, they deduce that the prime omega function, $\omega(m)$, $m \in \mathbb{N}$, which counts the number of distinct prime divisors of $m$, is normally distributed as $m \to \infty$, loosely speaking. Formally, let $I(\cdot)$ and $\Phi(\cdot)$ denote the indicator function and the distribution function of the standard normal distribution (i.e., $\mathcal{N}(0, 1)$), respectively. Then,
\begin{align}\label{eqn:ek}
\lim_{n \to \infty}\frac{1}{n}\sum_{m=3}^n I\left(\frac{\omega(m) - \ell_2(m)}{\sqrt{\ell_2(m)}} \leq x\right) = \Phi(x),
\end{align}
$x \in \mathbb{R}$, where $\ell_2(m) = \log\log(m)$. The sum starts at $m = 3$ to ensure that $\sqrt{\ell_2(m)}$ is real-valued. The above result \eqref{eqn:ek} is a major improvement of the previously known result, where Hardy and Ramanujan showed that the normal order of $\omega(m)$ is $\ell_2(m)$ \citep{hardy1917normal}.

Patrick Billingsley advocated a probabilistic view of the Erd\H{o}s-Kac theorem by utilizing the discrete uniform distribution \citep{billingsley1969central,billingsley1974probability}. To understand the probabilistic view, we let $\mathbb{N}_{c,n} = \{c, c+1, c+2, \ldots, n\}$, where $(c,n)$ is a pair of positive integers satisfying $c \leq n$. Then, \eqref{eqn:ek} is equivalent to
\begin{align}\label{eqn:ek2}
\lim_{n \to \infty}\mathbb{P}_n\left(m \colon \left\{ \frac{\omega(m) - \ell_2(m)}{\sqrt{\ell_2(m)}} \leq x\right\} \;\cap\; \left\{J(m) \in \mathbb{N}_{3,n} \right\}\right) = \Phi(x),
\end{align}
$x \in \mathbb{R}$, where $\mathbb{P}_n$ denotes the probability measure that places mass $1/n$ at each of $1, 2, \ldots, n$ among the first $n$ positive integers, and $J$ denotes an identity function (i.e., $J(m)=m$). In addition, as before, the condition $J(m) \in \mathbb{N}_{3,n}$ is necessary to make $\sqrt{\ell_2(m)}$ real-valued, and it does not affect the limit, noting that $\lim_{n \to \infty}\mathbb{P}_n(m \colon J(m) \in \mathbb{N}_{3,n}) = 1$. Although the condition $J(m) \in \mathbb{N}_{3,n}$ is often omitted from the literature on the Erd\H{o}s-Kac theorem, it is included in this article for the sake of clarity. In addition, as the input variable $m$ is often omitted from the expression in probability theory, \eqref{eqn:ek2} may also be expressed as
\begin{align}\label{eqn:ek2eq}
\lim_{n \to \infty}\mathbb{P}_n\left(\left\{\frac{\omega - \ell_2}{\sqrt{\ell_2}} \leq x\right\} \;\cap\; \left\{J \in \mathbb{N}_{3,n}\right\} \right) = \Phi(x),
\end{align}
$x \in \mathbb{R}$. 

Results on the convergence of a sequence of random variables in probability theory and mathematical statistics are usually expressed concisely with some special notations for convenience. For example, $Z_n \stackrel{d}\longrightarrow \Phi$ indicates that the sequence of cumulative distribution functions of the random variables $Z_n$, say, $\{F_{Z_n}\}$, converges pointwise to that of the standard normal distribution as $n \to \infty$. However, since we are dealing with convergence in distribution under $\mathbb{P}_n$, this type of convergence does not fall under a traditional type of convergence. Hence, it is convenient to have a notation to indicate the type of convergence in distribution we are dealing with. To do so, suppose that there is a sequence of functions $\{f^\star_n\}$ whose corresponding domains are given by $\mathbb{M}_n \subseteq \mathbb{N}_{1,n}$, each of which satisfies $\lim_{n \to \infty}\mathbb{P}_n(J \in \mathbb{M}_n) = 1$. Now, let $F^\star_n(x) = \mathbb{P}_n(f^\star_n \leq x \;\cap\; J \in \mathbb{M}_n)$, $x \in \mathbb{R}$, for all $n \geq n_1$, where $n_1$ is some positive integer. Then, we write $f^\star_n \Rightarrow F^\star$ when $\lim_{n \to \infty} F^\star_n(x) = F^\star(x)$ for all the continuity points $x$ of $F^\star$. Furthermore, when $f^\star_n \equiv f^\star$ and $f^\star \Rightarrow F^\star$, we say that $f^\star$ \emph{has} distribution $F^\star$ \citep{billingsley1974probability}. For example, \eqref{eqn:ek2eq} simply becomes 
\begin{align}\label{eqn:ek3}
\frac{\omega - \ell_2}{\sqrt{\ell_2}} \Rightarrow \Phi
\end{align}
with $\mathbb{M}_n = \mathbb{N}_{3,n}$ and $n_1 = 3$ so that $(\omega - \ell_2)/\sqrt{\ell_2}$ has the standard normal distribution. 

The notation ``$\Rightarrow$'' itself does not explicitly specify the sequence of domains $\{\mathbb{M}_n\}$ associated with the convergence. Thus, there is a concern that the domains associated with the convergence may become invalid when more than one function is involved in the convergence given by ``$\Rightarrow$''. However, it turns out that this is not a concern as long as we take the unions of the domains in the sequences. To give an example, suppose that $f^\star_1 \Rightarrow \Phi$ (with, say, $\mathbb{M}_{1,n}$ for $n \geq n_{1,1}$) and $f^\star_2 \Rightarrow 0$ (with, say, $\mathbb{M}_{2,n}$, for $n \geq n_{1,2}$) in the sense that
\begin{align*}
    \lim_{n \to \infty} \mathbb{P}_n\left(\left\{\left\lvert f^\star_2\right\rvert \leq \varepsilon\right\} \;\cap\; \{J \in \mathbb{M}_{2, n}\}\right) = 1
\end{align*}
for any $\varepsilon > 0$. Now, Slutsky's theorem implies that $f^\star_1 + f^\star_2 \Rightarrow \Phi$, which is taken to mean that the domains associated with the convergence are given by $\mathbb{M}_{3,n} = \mathbb{M}_{1,n} \cap \mathbb{M}_{2,n}$ for $n \geq n_{1,3}$, where $n_{1,3} = \max\{n_{1,1}, n_{1,2}\}$. In fact, it is straightforward to show that $\lim_{n \to \infty}\mathbb{P}_n(J \in \mathbb{M}_{3, n}) = 1$ since
\begin{align*}
1 \geq \lim_{n \to \infty}\mathbb{P}_n(J \in \mathbb{M}_{3, n}) \geq \lim_{n \to \infty}\mathbb{P}_n(J \in \mathbb{M}_{1, n}) + \lim_{n \to \infty}\mathbb{P}_n(J \in \mathbb{M}_{2, n}) - 1 = 1
\end{align*}
by applying the additive law of probability.

At the same time, $\{\mathbb{M}_{3, n}\}$, $n \geq n_{1,3}$, can be augmented while preserving the same convergence result. To understand how it works, take any $\{\mathbb{M}_{4, n}\}$, $n \geq n_{1,4}$, $n_{1,3} \geq n_{1,4}$, for which $\mathbb{M}_{3, n} \subseteq \mathbb{M}_{4, n} \subseteq \mathbb{N}_{1,n}$, $n \geq n_{1,3}$, and assume that the $\mathbb{M}_{4, n}$ represents the domain of $f^\star_1 + f^\star_2$ for all $n \geq n_{1,4}$ as well. Then, because $\lim_{n \to \infty}\mathbb{P}_n(J \in \mathbb{M}_{4, n}\backslash \mathbb{M}_{3,n}) = 0$, it is also valid to conclude that $f^\star_1 + f^\star_2 \Rightarrow \Phi$ with $\mathbb{M}_{4, n}$ for $n \geq n_{1,4}$. 

A number-theoretic application of \eqref{eqn:ek3} can be found in construction of an interval estimate of $\omega$ in the vicinity of a sufficiently large $m$, especially on the $\ell_2$ scale. For example, using a two-sided interval estimate of $\omega$ in the vicinity of a sufficiently large $m$, Billingsley claimed that about $60\%$ of the integers in the vicinity of $m = 10^{70}$ have $3$ to $7$ distinct prime divisors \citep{billingsley1969central}. Whether his claim remains valid depends on if $10^{70}$ is considered sufficiently large. From the computational point of view, $10^{70}$ is considered extremely large, making it difficult to directly verify his claim. At the same time, $10^{70}$ may be considered relatively small on the $\ell_2$ scale, noting that $\ell_2(10^{70}) \approx 5$. Thus, that raises the question of whether Billingsley's claim is trustworthy as an interval estimate as it may not be reliable when $\ell_2(m)$ is relatively small. Furthermore, if the interval estimate turns out to be unreliable, then we would like to know whether it is possible to obtain a better interval estimate without imposing an arduous computational burden. These questions naturally lead to an exploration of whether a family of interval estimates for $\omega$ can be constructed, and if so, which interval estimate is deemed most reliable.

To answer these questions, we explore different interval estimates for $\omega$ by following \cite{politis2024studentization}. Specifically, the arguments made in \cite{politis2024studentization} implies that, in addition to the usual standardization as we see in \eqref{eqn:ek3}, there exist two more general strategies to improve the reliability of interval estimates for $\omega$ even for relatively small positive integers. The first strategy is called variance stabilization, in which the variance of a suitably transformed $\omega$ no longer depends on $m$. In other words, the variance becomes a constant. The other strategy, which he calls the ``confidence region'' method, is similar to the score interval estimate approach suggested in \cite{wilson1927probable} and works as follows. Typically, some standardization is applied to a sequence of functions or random variables of interest after centering, such as dividing $\omega - \ell_2$ by $\sqrt{\ell_2}$, to obtain the (asymptotic) distribution of the sequence. However, in the case where the center and standardizer (such as $\ell_2$ and $\sqrt{\ell_2}$, respectively) both depend on a common function, by letting the standardized sequence equal to a constant, solving for the equation for the sequence naturally provides a lower and upper bound of the function of interest. 

As the variance stabilization strategy requires a transformation on $\omega$, it is natural to consider a family of Erd\H{o}s-Kac-type results when a transformation on $\omega$ is applied. However, instead of directly applying the transformation to $\omega$, we consider applying the transformation to $\omega/\ell_2$ instead. This idea comes from the famous Hardy-Ramanujan theorem  \citep{hardy1917normal}, which shows that the normal order of $\omega(m)$ is $\ell_2(m)$, i.e., $\omega/\ell_2 \Rightarrow 1$, in the sense that
\begin{align*}
    \lim_{n \to \infty} \mathbb{P}_n\left(\left\{\left\lvert\frac{\omega}{\ell_2} - 1\right\rvert \leq \varepsilon\right\} \;\cap\; \{J \in \mathbb{N}_{3, n}\}\right) = 1
\end{align*}
for any $\varepsilon > 0$. That is, when a transformation $g$ whose domain contains some interval around $1$ is applied, heuristically, 
\begin{align}\label{eqn:heuristics}
    g(\omega/\ell_2) - g(1) \approx g^\prime(1)(\omega/\ell_2 - 1)
\end{align}
in the vicinity of $1$ assuming that the derivative $g^\prime(1)$ exists. Therefore, we may naturally expect the distribution of $g(\omega/\ell_2)$ to follow the Gaussian law as well, assuming that $g^\prime(1) \neq 0$.

To make the heuristic argument above more rigorous, we utilize frequently used tools on stochastic convergence in probability theory and mathematical statistics. One of the main tools, called Slutsky's theorem, is useful when the convergence of multiple sequences of functions is considered \citep{serfling2009approximation}. Thankfully, Slutsky's theorem still holds even under $\mathbb{P}_n$ as $n \to \infty$ (see Remark 1 of \cite{billingsley1969central}). The other main tool, known as the delta method \citep{bera2023history}, is an extension of the heuristic argument made in \eqref{eqn:heuristics}. Although no results analogous to the delta method had been discussed under $\mathbb{P}_n$ as $n \to \infty$, we show that it still works, and hence the derivation of the distribution on the transformed $\omega/\ell_2$ is possible. That implies that a family of the Erd\H{o}s-Kac-type results can be derived through the probabilistic paradigm shift presented in \eqref{eqn:ek2eq}. In particular, the main result on the distribution of the the transformed $\omega/\ell_2$ is presented here for the sake of convenience.
\begin{theorem}\label{thm:mainomega}
Let $g$ be a (real-valued) function whose domain contains $[1-\delta, 1 + \delta]$ for some $\delta > 0$ with the assumptions that $g$ is differentiable at $1$ and that $g^\prime(1) \neq 0$. Then,
\begin{align}\label{eqn:mainomega}
\frac{g(\omega/\ell_2) - g(1)}{g^\prime(1)/\sqrt{\ell_2}} \Rightarrow \Phi.
\end{align}
\end{theorem}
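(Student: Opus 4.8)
The plan is to establish a delta-method analogue under $\mathbb{P}_n$, treating $\omega/\ell_2$ as the ``estimator'' converging to the ``parameter'' $1$, and then to transfer the asymptotic normality of $\omega/\ell_2$ through the smooth map $g$. The starting point is to rewrite the Erd\H{o}s-Kac statement \eqref{eqn:ek3} in a form centered at $1$. Dividing numerator and denominator by $\ell_2$ gives $(\omega/\ell_2 - 1)/(1/\sqrt{\ell_2}) = (\omega - \ell_2)/\sqrt{\ell_2} \Rightarrow \Phi$, so I already know the rate of convergence is $1/\sqrt{\ell_2}$ and that $\omega/\ell_2 - 1$, scaled by $\sqrt{\ell_2}$, is asymptotically standard normal.

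First I would make the linearization in \eqref{eqn:heuristics} precise. Since $g$ is differentiable at $1$, write $g(y) - g(1) = g'(1)(y-1) + R(y)$ where the remainder satisfies $R(y)/(y-1) \to 0$ as $y \to 1$; setting $y = \omega/\ell_2$, I want to show
\begin{align*}
\frac{g(\omega/\ell_2) - g(1)}{g'(1)/\sqrt{\ell_2}} = \frac{\omega/\ell_2 - 1}{1/\sqrt{\ell_2}} + \frac{\sqrt{\ell_2}\,R(\omega/\ell_2)}{g'(1)},
\end{align*}
where the first term on the right already has distribution $\Phi$. By Slutsky's theorem in the $\Rightarrow$ sense (valid under $\mathbb{P}_n$ by Remark 1 of \cite{billingsley1969central}), it then suffices to prove that the error term converges to $0$, i.e. $\sqrt{\ell_2}\,R(\omega/\ell_2) \Rightarrow 0$. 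Throughout I would restrict attention to domains $\mathbb{M}_n \subseteq \mathbb{N}_{3,n}$ on which $\omega/\ell_2 \in [1-\delta, 1+\delta]$, so that $g(\omega/\ell_2)$ is well-defined; the Hardy-Ramanujan theorem $\omega/\ell_2 \Rightarrow 1$ guarantees $\lim_{n\to\infty}\mathbb{P}_n(J \in \mathbb{M}_n) = 1$, and the union/augmentation bookkeeping for $\{\mathbb{M}_n\}$ developed in the excerpt ensures the combined convergence carries a legitimate sequence of domains.

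The main obstacle, and the heart of the delta method, is controlling the remainder $\sqrt{\ell_2}\,R(\omega/\ell_2)$. The difficulty is that $\sqrt{\ell_2} \to \infty$, so I cannot simply invoke continuity of $R$ at $1$; I must exploit that $R(y)$ is $o(y-1)$ precisely because $\omega/\ell_2 - 1$ is shrinking at the rate $1/\sqrt{\ell_2}$. The clean way to package this is to write $\sqrt{\ell_2}\,R(\omega/\ell_2)/g'(1) = \big[R(\omega/\ell_2)/(\omega/\ell_2 - 1)\big]\cdot\big[\sqrt{\ell_2}(\omega/\ell_2 - 1)/g'(1)\big]$ on the event where $\omega/\ell_2 \neq 1$. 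The second bracket is $\Rightarrow \Phi/g'(1)$, hence stochastically bounded (tight) under $\mathbb{P}_n$; the first bracket, by differentiability of $g$ at $1$, tends to $0$ whenever $\omega/\ell_2 \to 1$, which holds with probability tending to $1$. Making ``$o_p \times O_p = o_p$'' rigorous under $\mathbb{P}_n$ requires an $\varepsilon$-$\delta$ argument: for any $\eta, \varepsilon > 0$, choose $\delta'$ so that $|R(y)/(y-1)| < \eta$ on $0 < |y-1| < \delta'$, then bound $\mathbb{P}_n(|\sqrt{\ell_2}R(\omega/\ell_2)/g'(1)| > \varepsilon)$ using the event $\{|\omega/\ell_2 - 1| < \delta'\}$ (high probability by Hardy-Ramanujan) together with tightness of the second bracket. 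I would also handle the measure-zero-style event $\{\omega/\ell_2 = 1\}$ separately, where $R = 0$ trivially. Assembling these pieces via Slutsky's theorem yields \eqref{eqn:mainomega}.
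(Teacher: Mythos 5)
Your proposal is correct, and its core is the same delta-method argument the paper uses: your remainder ratio $R(y)/(y-1)$ is exactly the paper's key function $r(x) = \tfrac{g(x)-g(1)}{x-1} - g^\prime(1)$ (with $r(1)=0$), your identity is the paper's identity
\begin{align*}
\frac{g(\omega/\ell_2) - g(1)}{g^\prime(1)/\sqrt{\ell_2}} - \frac{\omega - \ell_2}{\sqrt{\ell_2}} = \frac{1}{g^\prime(1)}\,r\!\left(\frac{\omega}{\ell_2}\right)\left(\frac{\omega - \ell_2}{\sqrt{\ell_2}}\right),
\end{align*}
and your ``$o_p \times O_p$'' step is the paper's Slutsky step ($X_n \Rightarrow 0$, $Y_n \Rightarrow \Phi$ implies $X_nY_n \Rightarrow 0$ under $\mathbb{P}_n$), with Hardy--Ramanujan supplying $\omega/\ell_2 \Rightarrow 1$ and hence $r(\omega/\ell_2) \Rightarrow 0$. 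The genuine difference is the route: you work directly from the Erd\H{o}s--Kac statement \eqref{eqn:ek3}, normalized by $\ell_2(m)$, whereas the paper derives Theorem \ref{thm:mainomega} by specializing its general Theorem \ref{thm:main} for sequences of additive functions ($f_n \equiv \omega$, with Mertens' second theorem giving $A_n = \ell_2(n)+O(1)$, $B_n^2 = \ell_2(n)+O(1)$), which then forces two extra bridging steps to pass from the $(A_n, B_n)$-normalization to the $\ell_2(n)$-normalization and finally to the $\ell_2(m)$-normalization. Your route is shorter and self-contained given \eqref{eqn:ek3}, and it avoids those bridging lemmas entirely; the paper's detour buys the general result (needed for Corollary \ref{cor:h}, Theorem \ref{thm:main2}, and the treatment of arbitrary additive functions), of which Theorem \ref{thm:mainomega} is then a free corollary. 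Your handling of the domains $\mathbb{M}_n$ and of the event $\{\omega/\ell_2 = 1\}$ matches the paper's bookkeeping in Lemma \ref{lem:main}, so there is no gap.
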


To improve the reliability of interval estimation of $\omega$ in the vicinity of some positive integer $m$, we closely examine Erd\H{o}s-Kac-type results based on the Box-Cox transformation of $\omega$. The Box-Cox transformation, which includes the power transformation and the logarithmic transformation as special cases \citep{box1964analysis}, is key to obtaining a concise expression of \eqref{eqn:ek}. In particular, the square-root transformation can be identified as the variance-stabilizing power transformation, allowing us to have a distribution whose variance is independent of $m$. In other words, that allows us to develop simple criteria for determining the rarity of a given positive integer in terms of the number of distinct prime divisors. Also, we show that the width of the interval estimate of $\omega$ in the the vicinity of $m$ can be minimized asymptotically by applying the three-quarters power, implying that an optimization of the Erd\H{o}s-Kac theorem is possible through the Box-Cox transformation. 

It is common practice to assess the reliability of an interval estimate (e.g., confidence interval) through the coverage probability. Here, the coverage probability refers to the actual probability that the interval estimate contains the quantity of interest. If the coverage probability is in the vicinity of the nominal (theoretical) coverage probability, then the interval estimate is called \emph{reliable}. In our case, the coverage probability of the interval estimate at $m$ is given by the probability that the values of $\omega$ are included in the corresponding interval estimates in the vicinity of $m$. However, because of the discrete nature of $\omega(m)$, to assess the reliability of the interval estimates of $\omega$ derived from the Box-Cox transformation and from the confidence region approach, their fuzzy coverage probabilities are examined instead for relatively small $m$ (i.e., $m \in [10^5, 10^{14}]$). Here, the word \emph{fuzzy} is used to indicate that the coverage probability is modified using the idea of fuzzy set theory \citep{geyer2005fuzzy}. The modification is necessary as the coverage probability jumps dramatically every time an integer is included in or excluded from the interval estimate.

Unfortunately, as we shall see in Section \ref{sec:mod}, the fuzzy coverage probabilities of the interval estimates based on the variance stabilization and confidence region approaches using the Erd\H{o}s-Kac-type results tend to be much higher than the nominal coverage probability. Instead, the Poisson interval estimate derived from an extension of the prime number theorem discussed in \cite{landau1900quelques} turns out to be more reliable although its fuzzy coverage probabilities are still somewhat higher than the nominal coverage probability. Therefore, to further improve the reliability of these interval estimates, we train their mean and standard deviation using $\omega(m)$ for smaller $m$ (i.e., $m \in [10^4, 10^6]$). Note that the phrase \emph{training} in statistical modeling refers to estimation of the quantity of interest using available information. Then, we demonstrate numerically that the trained version of the score and Poisson interval estimate becomes more reliable when $m \in [10^5, 10^{14}]$, whereas the interval estimates from the Box-Cox transformation remain somewhat less reliable. 

As $\omega$ is a special case of the additive function, we also derive general results on the Gaussian law for a sequence of transformed additive functions. Then, by exponentiating the sequence of additive functions, we state analogous results for the sequence of positive-valued multiplicative functions as well. As a remark, the type of transformation we examine in this article is clearly different from other types of transformations (such as \cite{loyd2023dynamical}), which consider a one-to-one transformation of the standardized version of $\omega$ rather than $\omega$ itself. Lastly, in addition to the results derived from the transformed version of the Erd\H{o}s-Kac theorem, we present similar results on the transformed version of the Erd\H{o}s-Pomerance theorem \citep{erdos1985normal}, which deals with the distinct number of prime divisors of the Euler's totient function, $\varphi(m) = m\prod_{p \vert m} (1 - p^{-1})$, $m \in \mathbb{N}$, where $p$ denotes (the set of) prime numbers.

This article is organized as follows. In Section \ref{sec:main}, we present general results on the distribution of a sequence of transformed additive functions and discuss a similar result for a sequence of transformed positive-valued multiplicative functions. In addition, we show how Theorem \ref{thm:mainomega} follows from the general result. Then, by referring to the general result, in Section \ref{sec:box}, we apply the Box-Cox transformation to the general result and derive a modification of the Erd\H{o}s-Kac theorem. We also show that the square-root transformation achieves a variance-stabilizing transformation of \eqref{eqn:ek3} as a special case. Furthermore, in Section \ref{sec:interval}, we derive that the three-quarters power is asymptotically optimal when the goal is to minimize the width of the interval estimate of $\omega$ in the the vicinity of $m$, and examine how quickly the optimal power transformation converges to three-quarters. Then, in Section \ref{sec:prob}, we investigate the applicability of the Box-Cox-transformed Erd\H{o}s-Kac theorem as an interval estimate of $\omega$ in the vicinity of $m$ by paying attention to the inclusion and exclusion points. To address the problem of coverage probability jumps after the inclusion and exclusion points, in Section \ref{sec:mod}, we introduce the idea of fuzzy coverage probability as a measure of reliability for interval estimates of $\omega$ in the vicinity of $m$. Then, we demonstrate that the suggested interval estimates tend to have fuzzy coverage probabilities higher than the nominal coverage probability for $m \in [10^5, 10^{14}]$ except for the Poisson interval estimate. Next, in Section \ref{sec:train}, we consider a modification of the suggested interval estimates for $\omega$ by training the mean and standard deviation of $\omega$ for $m \in [10^4, 10^6]$. Using them, we show that both the score and Poisson interval estimates become more reliable after training when the fuzzy coverage probabilities for $m \in [10^5, 10^{14}]$ are compared to the nominal coverage probability. Moreover, in Section \ref{sec:ep}, we derive additional theoretical results on the transformed version of the Erd\H{o}s-Pomerance theorem. Finally, Section \ref{sec:conc} offers some concluding remarks, discussion on the reliability of Billingsley's interval estimate in the vicinity of $10^{70}$, and possible future work.

\section{Some general results}
\label{sec:main}
Let $\{f_n\}$ be a sequence of additive functions. That is, $f_n$ is an arithmetic function satisfying $f_n(m_1m_2) = f_n(m_1) + f_n(m_2)$ whenever positive integers $m_1$ and $m_2$ are relatively prime. Furthermore, for (the set of) prime numbers $p$, let $A_n = \sum_{p \leq n} f_n(p)/p$ and $B^2_n = \sum_{p \leq n} f_n^2(p)/p$. Here, it is assumed that there is a positive integer $n_0$ such that $A_n \neq 0$ and $B_n > 0$ for all $n \geq n_0$. Next, let $g$ be a (real-valued) function whose domain contains $[1-\delta, 1 + \delta]$ for some $\delta > 0$. Furthermore, we assume $g$ to be differentiable at $1$ and that $g^\prime(1) \neq 0$. Now, we start with the following preliminary result, which utilizes Theorem 2.1 of \cite{billingsley1974probability}. To do so, we use the key function:
\begin{align*}
r(x) = \begin{cases}
    \frac{g(x) - g(1)}{x-1} - g^\prime(1), & x \neq 1 \\
    0, & x = 1
  \end{cases},
\end{align*}
so that $r$ also has a domain which contains $[1 - \delta, 1 + \delta]$ and is continuous at $x = 1$ (cf., Theorem 3.1 of \cite{serfling2009approximation}).
\begin{lemma}\label{lem:main}
Assume $f_n(m)/B_n \to 0$ for all $m \in \mathbb{N}$ and $\vert A_n \vert/B_n \to \infty$ as $n \to \infty$. Then, $r(f_n/A_n) \Rightarrow 0$.
\end{lemma}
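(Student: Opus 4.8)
The plan is to reduce the claim to a Hardy-Ramanujan-type weak law for $f_n/A_n$ and then push that convergence through $r$, which is continuous at $1$ with $r(1)=0$. First I would establish the analogue of $\omega/\ell_2 \Rightarrow 1$, namely $f_n/A_n \Rightarrow 1$. For this I would invoke Theorem 2.1 of \cite{billingsley1974probability}: its hypothesis is exactly the first assumption $f_n(m)/B_n \to 0$ for all $m \in \mathbb{N}$, and its conclusion is the central limit theorem $(f_n - A_n)/B_n \Rightarrow \Phi$. In particular the standardized sequence $(f_n - A_n)/B_n$ is bounded in probability under $\mathbb{P}_n$.

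Next I would exploit the second assumption through the identity
\[
\frac{f_n}{A_n} - 1 = \frac{B_n}{A_n}\cdot\frac{f_n - A_n}{B_n}.
\]
Since $\lvert A_n\rvert/B_n \to \infty$, the deterministic factor $B_n/A_n$ tends to $0$, and Slutsky's theorem (which holds under $\mathbb{P}_n$ by Remark 1 of \cite{billingsley1969central}), applied with the limiting constant $0$, gives $(B_n/A_n)\cdot (f_n-A_n)/B_n \Rightarrow 0$. Hence $f_n/A_n - 1 \Rightarrow 0$, i.e.\ $f_n/A_n \Rightarrow 1$. At this stage the domain attached to the convergence is the intersection of the domains furnished by the two invoked results, which still carries asymptotic $\mathbb{P}_n$-mass $1$ by the union-of-domains bookkeeping recorded in the introduction.

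Finally I would transfer the convergence through $r$ by a direct continuous-mapping estimate rather than by citing a theorem, so as to keep the domain explicit. Given $\varepsilon > 0$, the continuity of $r$ at $1$ with $r(1)=0$ furnishes an $\eta \in (0,\delta]$ such that $\lvert x - 1\rvert \leq \eta$ forces both $x \in [1-\delta,1+\delta]$, where $r$ is defined, and $\lvert r(x)\rvert \leq \varepsilon$. Consequently, on the domain where $f_n/A_n$ already lies in $[1-\delta,1+\delta]$, the event $\{\lvert f_n/A_n - 1\rvert \leq \eta\}$ is contained in $\{\lvert r(f_n/A_n)\rvert \leq \varepsilon\}$; since the former has $\mathbb{P}_n$-measure tending to $1$ by the previous paragraph, so does the latter. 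As $\varepsilon$ was arbitrary, this is precisely $r(f_n/A_n) \Rightarrow 0$, with $\mathbb{M}_n$ taken to be the running domain intersected with $\{m \colon f_n(m)/A_n \in [1-\delta,1+\delta]\}$, a restriction of vanishing complementary measure.

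The probabilistic content is routine; the step I expect to demand the most care is the bookkeeping of the sequence $\{\mathbb{M}_n\}$. One must check that the domain still carries asymptotic mass $1$ after each reduction (the central limit theorem, the Slutsky product, and the composition with $r$) and, crucially, that $r(f_n/A_n)$ is genuinely well defined on $\mathbb{M}_n$, which is why the restriction to $\{m \colon f_n(m)/A_n \in [1-\delta,1+\delta]\}$ must be folded into $\mathbb{M}_n$. This is exactly the technicality the introduction's union-of-domains discussion is designed to absorb, so I would lean on that machinery rather than re-derive it.
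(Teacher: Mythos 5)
Your overall architecture --- first establish $f_n/A_n \Rightarrow 1$, then push this through $r$ using continuity at $1$ together with the explicit domain $\{m \colon f_n(m)/A_n \in [1-\delta,1+\delta]\}$ --- is exactly the paper's, and your final continuous-mapping step and the domain bookkeeping are carried out correctly. The gap is in how you obtain $f_n/A_n \Rightarrow 1$. You invoke Theorem 2.1 of Billingsley (1974) as a central limit theorem whose hypothesis is ``exactly the first assumption'' $f_n(m)/B_n \to 0$; that is a mischaracterization, and it matters. The CLT for additive functions is Theorem 3.1 of Billingsley (1974), and it requires the \emph{additional} hypothesis $\max_{p \leq n}\vert f_n(p)\vert/B_n \to 0$, which Lemma \ref{lem:main} deliberately does not assume --- that condition only enters later, in Theorem \ref{thm:main}. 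Under the lemma's hypotheses alone there is no CLT available, and $(f_n - A_n)/B_n$ need not converge in distribution at all, so the tightness (``bounded in probability'') that your Slutsky product step rests on has no justification. As written, your argument proves the lemma only under strictly stronger assumptions than those stated.

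The repair is to use what Theorem 2.1 of Billingsley (1974) actually says: it is a weak-law-type result asserting that, under only $\sup_{n \geq n_0}\vert f_n(m)\vert/B_n < \infty$ (which your first assumption does imply), one has $(f_n - A_n)/(\psi_n B_n) \Rightarrow 0$ whenever $\psi_n \to \infty$ (and, as the paper notes, the same holds with $\vert\psi_n\vert \to \infty$). Taking $\psi_n = A_n/B_n$, whose absolute value tends to infinity by your second assumption, yields in a single step
\[
\frac{f_n}{A_n} - 1 = \frac{f_n - A_n}{\psi_n B_n} \Rightarrow 0,
\]
which is precisely the conclusion your CLT-plus-Slutsky manipulation was aiming for. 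With this substitution --- weak law in place of CLT --- your proof coincides with the paper's.
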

\begin{proof}
First of all, Theorem 2.1 of \cite{billingsley1974probability} states that, if a sequence of additive functions $\{f_n\}$ is such that $\sup_{n \geq n_0} \vert f_n(m)\vert / B_n < \infty$, then, $(f_n - A_n)/(\psi_n B_n) \Rightarrow 0$ if $\psi_n \to \infty$. Furthermore, $f_n(m)/B_n \to 0$, $m \in \mathbb{N}$, implies $\sup_{n \geq n_0} \vert f_n(m)\vert/B_n < \infty$. Also, the theorem holds even if the assumption $\psi_n \to \infty$ is replaced with $\vert \psi_n \vert \to \infty$. Therefore, by letting $\psi_n = A_n/B_n$, 
\begin{align*}
\frac{f_n}{A_n} - 1 = \frac{f_n - A_n}{\psi_n B_n} \Rightarrow 0.
\end{align*}
Now, if we let
\begin{align*}
    \mathbb{M}_{n,\delta}  = \{m \in \mathbb{N}_{n_0, n} \colon f_n(m)/A_n \in [1-\delta, 1+\delta] \},
\end{align*}
$n \geq n_0$, then the above result implies that $\lim_{n \to \infty}\mathbb{P}_n(J \in \mathbb{M}_{n,\delta}) = 1$. Thus, since $r$ is continuous at $x = 1$, for any $\varepsilon > 0$, there exists $\delta^\star \in (0, \delta)$ so that $\vert x - 1 \vert < \delta^\star$ implies $\vert r(x) - r(1) \vert = \vert r(x) \vert < \varepsilon$. Hence,
\begin{align*}
&\lim_{n \to \infty}\mathbb{P}_n(\{\vert r(f_n/A_n) \vert \geq \varepsilon\} \cap \{J \in \mathbb{M}_{n,\delta}\})\\ 
\leq& \lim_{n \to \infty}\mathbb{P}_n(\{\vert f_n/A_n - 1 \vert \geq \delta^\star\} \cap \{J \in \mathbb{M}_{n,\delta}\}) = 0,
\end{align*}
implying that $r(f_n/A_n) \Rightarrow 0$ holds.
\end{proof}
Next, by following Theorem 3.1 of \cite{billingsley1974probability}, the following general result holds.
\begin{theorem}\label{thm:main}
Assume $f_n(m)/B_n \to 0$ for all $m \in \mathbb{N}$, and $\max_{p \leq n} \vert f_n(p) \vert/B_n \to 0$ and $\vert A_n \vert/B_n \to \infty$ as $n \to \infty$. Then, 
\begin{align}\label{eqn:main}
\frac{g(f_n/A_n) - g(1)}{g^\prime(1)(B_n/A_n)} \Rightarrow \Phi.
\end{align}
\end{theorem}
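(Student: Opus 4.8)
The plan is to combine the delta-method heuristic in \eqref{eqn:heuristics} with the preliminary Lemma \ref{lem:main} and Billingsley's version of the Erd\H{o}s-Kac theorem (Theorem 3.1 of \cite{billingsley1974probability}) via Slutsky's theorem under $\mathbb{P}_n$. First I would invoke Theorem 3.1 of \cite{billingsley1974probability}, which under the hypotheses $\max_{p \leq n}\vert f_n(p)\vert/B_n \to 0$ (a Lindeberg-type condition) gives the basic convergence
\begin{align*}
\frac{f_n - A_n}{B_n} \Rightarrow \Phi.
\end{align*}
Dividing numerator and denominator by $A_n$, this is equivalent to $(f_n/A_n - 1)/(B_n/A_n) \Rightarrow \Phi$, which is the standardized object I want to feed into the transformation $g$.

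Next I would make the delta-method expansion rigorous. Using the definition of $r$, for $x$ in the domain we have the exact identity $g(x) - g(1) = \{g^\prime(1) + r(x)\}(x-1)$. Substituting $x = f_n/A_n$ and dividing by the standardizer $g^\prime(1)(B_n/A_n)$ yields, after algebra,
\begin{align*}
\frac{g(f_n/A_n) - g(1)}{g^\prime(1)(B_n/A_n)} = \left(1 + \frac{r(f_n/A_n)}{g^\prime(1)}\right)\cdot\frac{f_n/A_n - 1}{B_n/A_n}.
\end{align*}
The second factor converges weakly to $\Phi$ by the previous step. For the first factor, Lemma \ref{lem:main} gives $r(f_n/A_n) \Rightarrow 0$ (its hypotheses $f_n(m)/B_n \to 0$ and $\vert A_n\vert/B_n \to \infty$ are all assumed here), so $1 + r(f_n/A_n)/g^\prime(1) \Rightarrow 1$. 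Slutsky's theorem under $\mathbb{P}_n$ then gives that the product converges weakly to $1 \cdot \Phi = \Phi$, which is exactly \eqref{eqn:main}.

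The one point requiring care, and the main obstacle, is the bookkeeping of the domains $\{\mathbb{M}_n\}$ attached to each ``$\Rightarrow$''. The factorization above is only valid on the event $\{J \in \mathbb{M}_{n,\delta}\}$ where $f_n/A_n$ lands in $[1-\delta,1+\delta]$ so that both $g$ and $r$ are defined; outside this event the identity for $r$ is meaningless. I would therefore carry out the Slutsky argument on the intersection of the relevant domains, using the discussion following \eqref{eqn:ek3} to confirm that intersecting finitely many domains each of full limiting $\mathbb{P}_n$-measure again yields a domain of full limiting measure, and that the standardized convergence of the second factor may be restricted to $\mathbb{M}_{n,\delta}$ without changing its limit since $\lim_{n\to\infty}\mathbb{P}_n(J \in \mathbb{M}_{n,\delta}) = 1$. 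Once the domains are handled consistently, the weak limit of the product is $\Phi$ and the proof is complete.
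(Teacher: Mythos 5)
Your proposal is correct and follows essentially the same route as the paper: both invoke Theorem 3.1 of \cite{billingsley1974probability} for $(f_n - A_n)/B_n \Rightarrow \Phi$, use Lemma \ref{lem:main} to get $r(f_n/A_n) \Rightarrow 0$, and conclude via Slutsky's theorem under $\mathbb{P}_n$. The only cosmetic difference is that you phrase the key identity multiplicatively, $X_n = (1 + r(f_n/A_n)/g^\prime(1))\,(f_n - A_n)/B_n$, while the paper writes the same identity additively as $X_n - (f_n - A_n)/B_n = g^\prime(1)^{-1} r(f_n/A_n)\,(f_n - A_n)/B_n \Rightarrow 0$; these are equivalent rearrangements, and your attention to the domain bookkeeping matches the paper's treatment.
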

\begin{proof}
First of all, Theorem 3.1 of \cite{billingsley1974probability} states that, assuming $f_n(m)/B_n \to 0$ for all $m \in \mathbb{N}$ and $\max_{p \leq n} \vert f_n(p) \vert/B_n \to 0$ as $n \to \infty$, $(f_n - A_n)/B_n \Rightarrow \Phi$. Moreover, 
\begin{align*}
\frac{g(f_n(m)/A_n) - g(1)}{g^\prime(1)(B_n/A_n)} - \frac{f_n(m) - A_n}{B_n} = \frac{1}{g^\prime(1)}r\left(\frac{f_n(m)}{A_n}\right)\left(\frac{f_n(m) - A_n}{B_n}\right).
\end{align*}
Furthermore, Lemma \ref{lem:main} and the fact that $X_n \Rightarrow 0$ and $Y_n \Rightarrow \Phi$ together imply $X_nY_n \Rightarrow 0$ under $\mathbb{P}_n$ by Slutsky's theorem show that 
\begin{align}\label{eqn:bridge}
\frac{g(f_n/A_n) - g(1)}{g^\prime(1)(B_n/A_n)} - \frac{f_n - A_n}{B_n} \Rightarrow 0
\end{align}
if $\vert A_n \vert/B_n \to \infty$ as $n \to \infty$. Finally, the fact that $X_n - Y_n \Rightarrow 0$ and $Y_n \Rightarrow \Phi$ implies $X_n \Rightarrow \Phi$ under $\mathbb{P}_n$ by Slutsky's theorem shows that \eqref{eqn:main} holds.
\end{proof}
In other words, Theorem \ref{thm:main} above, which may also be understood as
\begin{align*}
\lim_{n \to \infty}\frac{1}{n}\sum_{m \in \mathbb{M}_{n,\delta}} I\left(\frac{g(f_n(m)/A_n) - g(1)}{g^\prime(1)(B_n/A_n)} \leq x\right) = \Phi(x),
\end{align*}
$x \in \mathbb{R}$, achieves a generalization of Theorem 3.1 of \cite{billingsley1974probability} by applying the delta method to his result with the additional assumptions that $A_n \neq 0$ and $B_n > 0$ for all $n \geq n_0$ and that $\vert A_n \vert/B_n \to \infty$ as $n \to \infty$.

As a remark, even though a sequence of additive functions is considered in Theorem \ref{thm:main}, $f_n \equiv f$ is assumed in many applications such as \eqref{eqn:ek3}. Moreover, general results assuming $f_n \equiv f$ (such as the Kubilius-Shapiro theorem stated in Theorem 12.2 of \cite{elliott1980}) can be further modified by applying the $g$ function in a similar way as Theorem \ref{thm:main}.

As an application of Theorem \ref{thm:main}, let us consider $\{v_n\}$, a sequence of positive-valued multiplicative functions. That is, $v_n$ is an arithmetic function satisfying $v_n(m_1m_2) = v_n(m_1)v_n(m_2)$ whenever positive integers $m_1$ and $m_2$ are relatively prime. Also, let $C_n = (1/\log b)\sum_{p \leq n} \log v_n(p)/p$ and $D_n^2 = (1/\log b)^2\sum_{p \leq n} [\log v_n(p)]^2/p$, and that $C_n \neq 0$ and $D_n > 0$ for all $n \geq n_0$. Moreover, let $h$ be a positive-valued function whose domain contains $[\min\{b^{1-\delta}, b^{1+\delta}\}, \max\{b^{1-\delta}, b^{1+\delta}\}]$ for some $\delta > 0$ and $b \in (0, \infty)\backslash\{1\}$. Furthermore, let $h$ be differentiable at $b$ and that $h^\prime(b) \neq 0$. Then, the following corollary holds by letting $v_n(m) = b^{f_n(m)}$ and $g(x) = h(b^x)$ so that $g(1) = h(b)$ and $g^\prime(1) = h^\prime(b)b\log b$.
\begin{corollary}\label{cor:h}
Assume $\log v_n(m)/D_n \to 0$ for all $m \in \mathbb{N}$ and\\ $\max_{p \leq n} \vert \log v_n(p) \vert/D_n \to 0$, and $\vert C_n \vert/D_n \to \infty$ as $n \to \infty$. Then,
\begin{align*}
\frac{h(v_n^{1/C_n}) - h(b)}{h^\prime(b)b\log b(D_n/C_n)} \Rightarrow \Phi.
\end{align*}
\end{corollary}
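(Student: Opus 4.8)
The plan is to deduce Corollary~\ref{cor:h} directly from Theorem~\ref{thm:main} by the substitution indicated in the text, namely $f_n = \log v_n/\log b$ (equivalently $v_n = b^{f_n}$) together with $g(x) = h(b^x)$. First I would verify that $\{f_n\}$ is a legitimate sequence of additive functions: since each $v_n$ is positive-valued and multiplicative, for relatively prime $m_1, m_2$ we have $\log v_n(m_1 m_2) = \log v_n(m_1) + \log v_n(m_2)$, and dividing by the nonzero constant $\log b$ preserves this additivity, so $f_n(m_1 m_2) = f_n(m_1) + f_n(m_2)$. Positivity of $v_n$ guarantees that $\log v_n$, and hence $f_n$, is real-valued.

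Next I would match the auxiliary quantities. Substituting $f_n(p) = \log v_n(p)/\log b$ into the definitions of $A_n$ and $B_n^2$ gives $A_n = (1/\log b)\sum_{p \leq n}\log v_n(p)/p = C_n$ and $B_n^2 = (1/\log b)^2\sum_{p \leq n}[\log v_n(p)]^2/p = D_n^2$, so that $A_n = C_n$ and $B_n = D_n$; the hypotheses $C_n \neq 0$, $D_n > 0$ for $n \geq n_0$ then supply the required $A_n \neq 0$, $B_n > 0$. For the transformation, $g(x) = h(b^x)$ is differentiable at $1$ by the chain rule since $x \mapsto b^x$ is smooth, with $g(1) = h(b)$ and $g^\prime(1) = h^\prime(b)\, b \log b$; because $h^\prime(b) \neq 0$, $b > 0$, and $\log b \neq 0$ (as $b \neq 1$), we obtain $g^\prime(1) \neq 0$. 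The domain requirement also transfers: as $x$ ranges over $[1-\delta, 1+\delta]$, the value $b^x$ ranges over $[\min\{b^{1-\delta}, b^{1+\delta}\}, \max\{b^{1-\delta}, b^{1+\delta}\}]$ (the $\min$ and $\max$ accommodating both $b > 1$ and $0 < b < 1$), which lies in the domain of $h$ by assumption, so $g$ is defined on $[1-\delta, 1+\delta]$.

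I would then check that the three limiting hypotheses of Corollary~\ref{cor:h} are precisely those of Theorem~\ref{thm:main} under this substitution. Because $\log b$ is a fixed nonzero constant, $f_n(m)/B_n = \log v_n(m)/(D_n \log b) \to 0$ is equivalent to $\log v_n(m)/D_n \to 0$; similarly $\max_{p \leq n}\vert f_n(p)\vert/B_n \to 0$ is equivalent to $\max_{p \leq n}\vert\log v_n(p)\vert/D_n \to 0$; and $\vert A_n\vert/B_n = \vert C_n\vert/D_n \to \infty$ is the remaining hypothesis verbatim. Hence every assumption of Theorem~\ref{thm:main} is in force.

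Finally, the crux is the algebraic identification that converts the conclusion of Theorem~\ref{thm:main} into the stated form. Here I would compute $g(f_n/A_n) = h(b^{f_n/C_n})$ and simplify the exponent via $b^{f_n/C_n} = \exp\bigl((f_n/C_n)\log b\bigr) = \exp\bigl((\log v_n)/C_n\bigr) = v_n^{1/C_n}$, so that $g(f_n/A_n) = h(v_n^{1/C_n})$. Substituting $A_n = C_n$, $B_n = D_n$, $g(1) = h(b)$, and $g^\prime(1) = h^\prime(b) b \log b$ into the conclusion of Theorem~\ref{thm:main} then yields
\begin{align*}
\frac{g(f_n/A_n) - g(1)}{g^\prime(1)(B_n/A_n)} = \frac{h(v_n^{1/C_n}) - h(b)}{h^\prime(b) b \log b\,(D_n/C_n)} \Rightarrow \Phi,
\end{align*}
which is exactly the claimed convergence. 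Since the reduction is a direct change of variables, no genuinely new analytic difficulty arises; the only points demanding care are the bookkeeping of the $1/\log b$ factors in $A_n$ and $B_n$ and the verification of the exponential identity $b^{f_n/C_n} = v_n^{1/C_n}$, both addressed above.
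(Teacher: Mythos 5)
Your proposal is correct and follows exactly the route the paper intends: the paper derives Corollary~\ref{cor:h} from Theorem~\ref{thm:main} by the very substitution $v_n = b^{f_n}$ (equivalently $f_n = \log v_n/\log b$) and $g(x) = h(b^x)$, so that $A_n = C_n$, $B_n = D_n$, $g(1) = h(b)$, and $g^\prime(1) = h^\prime(b)b\log b$. Your write-up simply fills in the bookkeeping (additivity of $f_n$, domain and differentiability of $g$, equivalence of the hypotheses, and the identity $b^{f_n/C_n} = v_n^{1/C_n}$) that the paper leaves implicit.
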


Now, let us explore the possibility of replacing $g$ in Theorem \ref{thm:main} with a sequence of transformations $\{g_n\}$. Lemma \ref{lem:main} hints that the key step for replacing $g$ with $\{g_n\}$ lies in the uniform convergence of $\{r_n\}$ to $r$ in $[1-\delta, 1+\delta]$, where
\begin{align*}
r_n(x) = \begin{cases}
    \frac{g_n(x) - g_n(1)}{x-1} - g^\prime_n(1), & x \neq 1 \\
    0, & x = 1
  \end{cases},
\end{align*}
$x \in [1-\delta, 1+\delta]$. To show this, we further assume differentiability of $g$ and $\{g_n\}$ on $[1-\delta, 1+\delta]$ and use the following lemma.
\begin{lemma}\label{lem:main2}
Suppose that $\{g_n\}$ is a sequence of differentiable functions whose derivatives $\{g^\prime_n\}$ converge uniformly to $g^\prime$ on $[1-\delta, 1+\delta]$ and that $\lim_{n \to \infty} g_n(1) = g(1)$. Then, $\{r_n\}$ and $\{g_n\}$ converge uniformly to $r$ and $g$, respectively, on $[1-\delta, 1+\delta]$.
\end{lemma}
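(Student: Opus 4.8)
The plan is to reduce both uniform-convergence assertions to a single auxiliary function, the difference $\phi_n = g_n - g$, and to control it entirely through the Mean Value Theorem, so that neither integrability nor continuity of the derivatives is required. First I would record the elementary facts about $\phi_n$: it is differentiable (hence continuous) on $[1-\delta, 1+\delta]$ with $\phi'_n = g'_n - g'$; writing $M_n = \sup_{t \in [1-\delta,\,1+\delta]} \vert g'_n(t) - g'(t)\vert$, the hypothesis that $\{g'_n\}$ converges uniformly to $g'$ gives $M_n \to 0$, and the assumption $\lim_{n\to\infty} g_n(1) = g(1)$ gives $\phi_n(1) \to 0$.

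For the uniform convergence of $\{g_n\}$, I would write $\phi_n(x) = \phi_n(1) + [\phi_n(x) - \phi_n(1)]$. For each $x \neq 1$ the Mean Value Theorem supplies a point $\xi$ strictly between $1$ and $x$ with $\phi_n(x) - \phi_n(1) = \phi'_n(\xi)(x-1)$, so that $\vert \phi_n(x) - \phi_n(1)\vert \leq M_n\vert x-1\vert \leq \delta M_n$, using $\vert x - 1\vert \leq \delta$ throughout the interval. Hence $\sup_{x}\vert g_n(x) - g(x)\vert \leq \vert \phi_n(1)\vert + \delta M_n \to 0$, which is precisely the desired uniform convergence of $\{g_n\}$ to $g$.

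For the uniform convergence of $\{r_n\}$, the point $x=1$ is trivial since $r_n(1) = r(1) = 0$. For $x \neq 1$ I would subtract the two definitions and regroup the numerator so that it depends only on $\phi_n$, obtaining
\[
r_n(x) - r(x) = \frac{\phi_n(x) - \phi_n(1)}{x-1} - \phi'_n(1).
\]
The Mean Value Theorem again furnishes a $\xi$ between $1$ and $x$ with $[\phi_n(x) - \phi_n(1)]/(x-1) = \phi'_n(\xi)$, so this quotient is bounded by $M_n$ uniformly in $x$; since also $\vert \phi'_n(1)\vert \leq M_n$, we get $\vert r_n(x) - r(x)\vert \leq 2M_n \to 0$ with a bound independent of $x$.

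I expect the main obstacle to be the apparent singularity of the difference quotient defining $r_n$ as $x \to 1$, which a priori could destroy uniform control near the centre. The observation that defuses this is that it is the \emph{difference} $\phi_n = g_n - g$ whose difference quotient must be estimated, not $g_n$ or $g$ individually: because $\phi_n$ has a uniformly small derivative on the entire interval, the Mean Value Theorem bounds its difference quotient by $M_n$ with no dependence on how close $x$ lies to $1$, so the singularity never actually materialises and the estimate is automatically uniform.
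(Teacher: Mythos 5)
Your proof is correct and takes essentially the same route as the paper: both arguments apply the mean value theorem to the difference $\phi_n = g_n - g$, bounding $\lvert r_n(x) - r(x)\rvert$ by $\lvert \phi_n'(\xi)\rvert + \lvert \phi_n'(1)\rvert \leq 2\sup\lvert g_n' - g'\rvert$ uniformly in $x$. The only cosmetic difference is that where the paper invokes Rudin's Theorem 7.17 for the uniform convergence of $\{g_n\}$, you prove that part directly with the same MVT estimate, which is a perfectly good self-contained substitute for the citation.
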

\begin{proof}
As the case where $x = 1$ is trivial, let $x \neq 1$. Then, for any $\varepsilon > 0$, there exists $n_2 \in \mathbb{N}$ so that for all $n \geq n_2$,
\begin{align*}
    &\lvert r_n(x) - r(x) \rvert = \left\lvert \frac{g_n(x) - g_n(1)}{x-1} - g^\prime_n(1) -\frac{g(x) - g(1)}{x-1} + g^\prime(1) \right\rvert\\
    \leq& \lvert g^\prime_n(\xi_n) - g^\prime(\xi_n) \rvert + \lvert g^\prime_n(1) - g^\prime(1) \rvert < \varepsilon,
\end{align*}
where $\xi_n \in (\min\{1, x\}, \max\{1, x\})$ by the mean value theorem. Furthermore, the condition $\lim_{n \to \infty} g_n(1) = g(1)$ guarantees the uniform convergence of $\{g_n\}$ to $g$ on $[1-\delta, 1+\delta]$ by Theorem 7.17 of \cite{rudin1976principles}.
\end{proof}

\begin{lemma}\label{lem:main3}
Under the same set of assumptions as Lemma \ref{lem:main}, for $\{g_n\}$ in Lemma \ref{lem:main2}, $r_n(f_n/A_n) \Rightarrow 0$.
\end{lemma}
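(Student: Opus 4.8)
The plan is to reduce the claim to Lemma~\ref{lem:main} by exploiting that the argument $f_n/A_n$ stays, with probability tending to one, inside the interval $[1-\delta,1+\delta]$ on which Lemma~\ref{lem:main2} guarantees uniform convergence of $r_n$ to $r$. First I would recall from the proof of Lemma~\ref{lem:main} that, under the stated assumptions, $f_n/A_n - 1 \Rightarrow 0$, so that with $\mathbb{M}_{n,\delta} = \{m \in \mathbb{N}_{n_0,n} \colon f_n(m)/A_n \in [1-\delta,1+\delta]\}$ we have $\lim_{n \to \infty}\mathbb{P}_n(J \in \mathbb{M}_{n,\delta}) = 1$. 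This is precisely the high-probability event on which the uniform estimate will be applicable.

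Next, for a fixed $\varepsilon > 0$, I would bound $r_n$ by $r$ plus the uniform error. On $\{J \in \mathbb{M}_{n,\delta}\}$ the point $x = f_n(m)/A_n$ lies in $[1-\delta,1+\delta]$, so the triangle inequality gives $\lvert r_n(x)\rvert \leq \sup_{t \in [1-\delta,1+\delta]} \lvert r_n(t) - r(t)\rvert + \lvert r(x)\rvert$. By Lemma~\ref{lem:main2} the supremum tends to $0$, so there is $n_3$ with $\sup_{t \in [1-\delta,1+\delta]} \lvert r_n(t) - r(t)\rvert < \varepsilon/2$ for all $n \geq n_3$; hence, on this event, $\lvert r_n(f_n/A_n)\rvert \geq \varepsilon$ forces $\lvert r(f_n/A_n)\rvert \geq \varepsilon/2$.

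Finally I would translate this pointwise implication into probabilities. For $n \geq n_3$,
\begin{align*}
\mathbb{P}_n(\{\lvert r_n(f_n/A_n)\rvert \geq \varepsilon\} \cap \{J \in \mathbb{M}_{n,\delta}\}) \leq \mathbb{P}_n(\{\lvert r(f_n/A_n)\rvert \geq \varepsilon/2\} \cap \{J \in \mathbb{M}_{n,\delta}\}),
\end{align*}
and the right-hand side tends to $0$ because Lemma~\ref{lem:main} already gives $r(f_n/A_n) \Rightarrow 0$. Combined with $\lim_{n \to \infty}\mathbb{P}_n(J \in \mathbb{M}_{n,\delta}) = 1$, this yields $r_n(f_n/A_n) \Rightarrow 0$.

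The main obstacle---and the reason the extra differentiability and uniform-convergence hypotheses of Lemma~\ref{lem:main2} are needed---is that $r_n$ is evaluated at the \emph{random} argument $f_n/A_n$, so the continuity argument used for the single function $r$ in Lemma~\ref{lem:main} does not transfer verbatim. The resolution is precisely to decouple the two sources of variation: uniform convergence $r_n \to r$ controls the discrepancy $\lvert r_n - r\rvert$ deterministically and uniformly over the relevant interval, while the distributional statement of Lemma~\ref{lem:main} handles $r(f_n/A_n)$. Restricting to $\mathbb{M}_{n,\delta}$ is what makes the uniform bound legitimate, since uniform convergence is only asserted on $[1-\delta,1+\delta]$.
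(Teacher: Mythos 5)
Your proof is correct, and it differs from the paper's in a small but genuine way: the paper's proof of Lemma \ref{lem:main3} never invokes the conclusion of Lemma \ref{lem:main}; instead it reruns that lemma's continuity argument with $r_n$ in place of $r$. Concretely, the paper combines the uniform convergence $r_n \to r$ from Lemma \ref{lem:main2} with the continuity of $r$ at $1$ to produce a $\delta^\star_2 \in (0,\delta)$ such that $\vert x - 1\vert < \delta^\star_2$ implies $\vert r_n(x)\vert \leq \vert r_n(x) - r(x)\vert + \vert r(x)\vert < \varepsilon^\star$ for all large $n$, and then bounds $\mathbb{P}_n(\{\vert r_n(f_n/A_n)\vert \geq \varepsilon^\star\} \cap \{J \in \mathbb{M}_{n,\delta}\})$ by $\mathbb{P}_n(\{\vert f_n/A_n - 1\vert \geq \delta^\star_2\} \cap \{J \in \mathbb{M}_{n,\delta}\})$, which vanishes because $f_n/A_n - 1 \Rightarrow 0$. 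You instead use uniform convergence alone to pass from $r_n$ to $r$, via the inclusion $\{\vert r_n(f_n/A_n)\vert \geq \varepsilon\} \subseteq \{\vert r(f_n/A_n)\vert \geq \varepsilon/2\}$ on $\mathbb{M}_{n,\delta}$ for large $n$, and then cite Lemma \ref{lem:main} as a black box for $r(f_n/A_n) \Rightarrow 0$. Your route is more modular: the continuity-at-$1$ argument is carried out once, inside Lemma \ref{lem:main}, and never repeated, and any strengthening of that lemma would transfer automatically. The paper's route reduces everything to the more primitive fact $f_n/A_n - 1 \Rightarrow 0$ and, incidentally, needs the uniform convergence of $r_n$ only in a neighborhood of $1$ rather than on all of $[1-\delta,1+\delta]$. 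Both arguments use the same hypotheses and the same localization event $\mathbb{M}_{n,\delta}$, so the difference is one of bookkeeping rather than substance, and your proof is a valid substitute for the paper's.
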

\begin{proof}
Note that, for any $\varepsilon^\star > 0$, there exist $\delta^\star_2 \in (0, \delta)$ and $n_2 \in \mathbb{N}$ so that $\vert x - 1 \vert < \delta^\star_2$ implies 
\begin{align*}
\vert r_n(x) \vert \leq \vert r_n(x) - r(x) \vert + \vert r(x) \vert < \varepsilon^\star
\end{align*}
for all $n \geq n_2$ by Lemma \ref{lem:main2}. Therefore,
\begin{align*}
&\lim_{n \to \infty}\mathbb{P}_n(\{\vert r_n(f_n/A_n) \vert \geq \varepsilon^\star\} \cap \{J \in \mathbb{M}_{n,\delta}\})\\ 
\leq& \lim_{n \to \infty}\mathbb{P}_n(\{\vert f_n/A_n - 1 \vert \geq \delta^\star_2\} \cap \{J \in \mathbb{M}_{n,\delta}\}) = 0,
\end{align*}
implying that $r_n(f_n/A_n) \Rightarrow 0$ holds.
\end{proof}

\begin{theorem}\label{thm:main2}
Under the same set of assumptions as Theorem \ref{thm:main}, for $\{g_n\}$ in Lemma \ref{lem:main2},
\begin{align}\label{eqn:main2}
\frac{g_n(f_n/A_n) - g_n(1)}{g^\prime_n(1)(B_n/A_n)} \Rightarrow \Phi.
\end{align}
\end{theorem}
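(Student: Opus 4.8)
The plan is to replay the proof of Theorem \ref{thm:main} essentially verbatim, substituting the sequence-level ingredients for their fixed-$g$ counterparts. The engine of that earlier argument was an exact algebraic identity expressing the gap between the transformed sequence and the unstandardized sequence $(f_n - A_n)/B_n$ as a product of a vanishing factor and an asymptotically normal one. That identity survives the replacement of $g$ by $g_n$: writing $x = f_n/A_n$ and using $g_n(x) - g_n(1) = (x-1)[r_n(x) + g^\prime_n(1)]$, which is just the definition of $r_n$ rearranged, one obtains
\begin{align*}
\frac{g_n(f_n/A_n) - g_n(1)}{g^\prime_n(1)(B_n/A_n)} - \frac{f_n - A_n}{B_n} = \frac{1}{g^\prime_n(1)} r_n\!\left(\frac{f_n}{A_n}\right)\!\left(\frac{f_n - A_n}{B_n}\right),
\end{align*}
the exact analogue of the bridge identity \eqref{eqn:bridge}. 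From here the structure is: Lemma \ref{lem:main3} supplies $r_n(f_n/A_n) \Rightarrow 0$, Billingsley's Theorem 3.1 supplies $(f_n - A_n)/B_n \Rightarrow \Phi$, and two applications of Slutsky's theorem under $\mathbb{P}_n$ close the argument as in the proof of Theorem \ref{thm:main}.

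The one genuinely new point, which I expect to be the main obstacle, is the treatment of the coefficient $1/g^\prime_n(1)$: it now varies with $n$ rather than being the fixed constant $1/g^\prime(1)$ used in Theorem \ref{thm:main}. For the product $\frac{1}{g^\prime_n(1)} r_n(f_n/A_n)$ to inherit $\Rightarrow 0$ from Lemma \ref{lem:main3}, I would show that $\{1/g^\prime_n(1)\}$ is a bounded deterministic sequence. This follows from the hypotheses of Lemma \ref{lem:main2}: uniform convergence of $\{g^\prime_n\}$ to $g^\prime$ on $[1-\delta, 1+\delta]$ gives in particular $g^\prime_n(1) \to g^\prime(1)$, and since $g^\prime(1) \neq 0$ there is an index beyond which $g^\prime_n(1)$ is bounded away from $0$, so $1/g^\prime_n(1)$ is well defined and bounded for all large $n$. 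A bounded deterministic sequence times a sequence converging to $0$ in the $\Rightarrow 0$ sense still converges to $0$, so $\frac{1}{g^\prime_n(1)} r_n(f_n/A_n) \Rightarrow 0$.

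With that boundedness in hand, the remainder is routine. Slutsky's theorem (in the form ``$X_n \Rightarrow 0$ and $Y_n \Rightarrow \Phi$ imply $X_n Y_n \Rightarrow 0$'') applied to the right-hand side of the identity above shows the left-hand side $\Rightarrow 0$. Combining this with $(f_n - A_n)/B_n \Rightarrow \Phi$ via the complementary form ``$X_n - Y_n \Rightarrow 0$ and $Y_n \Rightarrow \Phi$ imply $X_n \Rightarrow \Phi$'' then yields \eqref{eqn:main2}. Throughout, the domain bookkeeping is identical to the fixed-$g$ case: every convergence is taken on the intersection with $\{J \in \mathbb{M}_{n,\delta}\}$, and the finitely many early indices at which $g^\prime_n(1)$ might vanish are absorbed into the choice of starting index $n_1$, which does not affect the limit.
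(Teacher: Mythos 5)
Your proposal is correct and follows essentially the same route as the paper's own proof: the same bridge identity with $g_n$ and $r_n$ in place of $g$ and $r$, Lemma \ref{lem:main3} for $r_n(f_n/A_n) \Rightarrow 0$, and the observation that $g^\prime_n(1) \to g^\prime(1) \neq 0$ (which the paper invokes exactly as you do, though you spell out the boundedness of $1/g^\prime_n(1)$ more explicitly) before closing with Slutsky's theorem as in Theorem \ref{thm:main}.
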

\begin{proof}
First of all, note that, 
\begin{align*}
\frac{g_n(f_n(m)/A_n) - g_n(1)}{g^\prime_n(1)(B_n/A_n)} - \frac{f_n(m) - A_n}{B_n} = \frac{1}{g^\prime_n(1)}r_n\left(\frac{f_n(m)}{A_n}\right)\left(\frac{f_n(m) - A_n}{B_n}\right).
\end{align*}
Furthermore, Lemma \ref{lem:main3} and the fact that $\lim_{n \to \infty} g^\prime_n(1) = g^\prime(1) \neq 0$ together imply that 
\eqref{eqn:main2} holds by following the same argument as Theorem \ref{thm:main}.
\end{proof}
Note that a result similar to Corollary \ref{cor:h} may be obtained by replacing $h$ with an appropriate sequence $\{h_n\}$ as well. Also, Theorem \ref{thm:main2} is not a full generalization of Theorem \ref{thm:main} as it requires $g$ and $g_n$, $n \in \mathbb{N}$, to be differentiable on $[1-\delta, 1+\delta]$. For simplicity, in the following sections, the results are derived assuming that the same transformation $g$ is applied instead of a sequence of transformations $\{g_n\}$. 

In the next few sections, we will mainly focus on the results derived from Theorem \ref{thm:mainomega}. For that reason, it is beneficial to see how Theorem \ref{thm:main} is related to Theorem \ref{thm:mainomega}. To do so, let $f_n \equiv \omega$,  $A_n = \ell_2(n) + O(1)$, and $B_n^2 = \ell_2(n) + O(1)$, $n \geq 3$. Note that the results on $A_n$ and $B_n^2$ follow from Mertens' second theorem \citep{billingsley1974probability}. Then, because all the assumptions for Theorem \ref{thm:main} are satisfied, by \eqref{eqn:bridge}, the distributions of  
\begin{align*}
\frac{g(f_n/A_n) - g(1)}{g^\prime(1)(B_n/A_n)} \mbox{ and } \frac{f_n - A_n}{B_n}
\end{align*}
are both $\mathcal{N}(0,1)$. Furthermore, because
\begin{align*}
\frac{f_n - A_n}{B_n} - \frac{\omega - \ell_2(n)}{\sqrt{\ell_2(n)}} \Rightarrow 0,
\end{align*}
which is straightforward to show, and that it is derived in \cite{billingsley1969central} that
\begin{align*}
\frac{\omega - \ell_2}{\sqrt{\ell_2}} - \frac{\omega - \ell_2(n)}{\sqrt{\ell_2(n)}} \Rightarrow 0,
\end{align*}
the fact that
\begin{align*}
\frac{g(\omega/\ell_2) - g(1)}{g^\prime(1)/\sqrt{\ell_2}} - \frac{\omega - \ell_2}{\sqrt{\ell_2}} = \frac{1}{g^\prime(1)}r\left(\frac{\omega}{\ell_2}\right)\left(\frac{\omega - \ell_2}{\sqrt{\ell_2}}\right) \Rightarrow 0
\end{align*}
indicates that
\begin{align*}
\frac{g(\omega/\ell_2) - g(1)}{g^\prime(1)/\sqrt{\ell_2}}\Rightarrow \Phi
\end{align*}
also follows, which is exactly what is presented in Theorem \ref{thm:mainomega}.

In Section \ref{sec:box}, we apply a generalization of the power transformation called the Box-Cox transformation to \eqref{eqn:mainomega} to obtain a practical general expression for the Erd\H{o}s-Kac theorem on the distribution of power- and log-transformed $\omega$. Using that, we demonstrate how the square-root transformation leads to a simple expression as a result of the variance-stabilizing transformation. 

\section{Box-Cox transformation on the Erd\H{o}s-Kac theorem}\label{sec:box}
The Box-Cox transformation is a generalization of the power transformation defined on $x > 0$ as 
\begin{align*}
y_\lambda(x) = \begin{cases}
    \frac{x^\lambda - 1}{\lambda}, & \lambda \neq 0 \\
    \log(x), & \lambda = 0
  \end{cases}.
\end{align*}
By applying Theorem \ref{thm:mainomega} to the case where $g(x) = y_\lambda(x)$ and $\mathbb{M}_{n,\delta} \subseteq \mathbb{N}_{3,n}$ for any $\delta \in (0, 1)$, $n \geq 3$, we obtain the following result.
\begin{theorem}\label{thm:m1} 
When $\lambda \neq 0$,
\begin{align}\label{eqn:m1}
\frac{\omega^\lambda - \ell_2^\lambda}{\lambda\ell_2^{\lambda - 1/2}} \Rightarrow \Phi,
\end{align}
and when $\lambda = 0$,
\begin{align}\label{eqn:m2}
\frac{\log\omega - \log\ell_2}{1/\sqrt{\ell_2}} \Rightarrow \Phi.
\end{align}
\end{theorem}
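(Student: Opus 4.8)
The plan is to apply Theorem \ref{thm:mainomega} directly with $g = y_\lambda$, handling the two regimes $\lambda \neq 0$ and $\lambda = 0$ separately, and then to simplify the resulting quotient by elementary algebra. First I would confirm that $y_\lambda$ satisfies the three hypotheses of Theorem \ref{thm:mainomega}. Since $y_\lambda$ is defined on $x > 0$, any choice of $\delta \in (0,1)$ forces $[1-\delta, 1+\delta] \subset (0,\infty)$, so the domain condition holds; this is exactly the restriction $\mathbb{M}_{n,\delta} \subseteq \mathbb{N}_{3,n}$ for $\delta \in (0,1)$ recorded just before the statement. Smoothness of $y_\lambda$ on $(0,\infty)$ supplies differentiability at $1$ in both cases, and the nonvanishing of $g^\prime(1)$ will be verified by the explicit derivative computations below.

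For $\lambda \neq 0$, I would compute $g(1) = (1^\lambda - 1)/\lambda = 0$ and $g^\prime(x) = x^{\lambda-1}$, so $g^\prime(1) = 1 \neq 0$, confirming the final hypothesis. Substituting $g(\omega/\ell_2) = ((\omega/\ell_2)^\lambda - 1)/\lambda = (\omega^\lambda - \ell_2^\lambda)/(\lambda\ell_2^\lambda)$ into \eqref{eqn:mainomega} and clearing the standardizer $g^\prime(1)/\sqrt{\ell_2} = 1/\sqrt{\ell_2}$ folds the factor $\sqrt{\ell_2}$ against the denominator $\ell_2^\lambda$ to yield $\ell_2^{\lambda - 1/2}$, giving \eqref{eqn:m1}.

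For $\lambda = 0$, I would note $g(x) = \log x$, so $g(1) = 0$ and $g^\prime(1) = 1 \neq 0$. Then $g(\omega/\ell_2) = \log(\omega/\ell_2) = \log\omega - \log\ell_2$, and dividing by $g^\prime(1)/\sqrt{\ell_2} = 1/\sqrt{\ell_2}$ produces \eqref{eqn:m2} at once. In both regimes the value $g^\prime(1) = 1$ is what makes the statements especially clean, since the standardizer reduces to $1/\sqrt{\ell_2}$.

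Because each case collapses to a single substitution followed by routine simplification, I do not anticipate any genuine obstacle. The only point demanding care is the bookkeeping of the domain: one must keep $\delta < 1$ so that $1 - \delta > 0$ and the Box-Cox transformation remains well-defined on $[1-\delta, 1+\delta]$, which is precisely why the relevant $\mathbb{M}_{n,\delta}$ lies inside $\mathbb{N}_{3,n}$. Once the hypotheses of Theorem \ref{thm:mainomega} are checked, the conclusion follows immediately in both $\lambda \neq 0$ and $\lambda = 0$.
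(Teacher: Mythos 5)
Your proposal is correct and is precisely the argument the paper intends: the paper states Theorem \ref{thm:m1} as a direct application of Theorem \ref{thm:mainomega} with $g = y_\lambda$ and $\delta \in (0,1)$ (omitting the details "for the sake of brevity"), and your verification that $g(1) = 0$ and $g^\prime(1) = 1$ in both regimes, together with the algebraic simplification of the standardizer to $\lambda\ell_2^{\lambda-1/2}$ (resp.\ $1/\sqrt{\ell_2}$), fills in exactly those omitted details. The domain bookkeeping with $\delta < 1$ and $\mathbb{M}_{n,\delta} \subseteq \mathbb{N}_{3,n}$ also matches the paper's remark following the theorem.
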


The proof of Theorem \ref{thm:m1} is omitted for the sake of brevity. Also, because $y_\lambda(\omega(m)/\ell_2(m))$ is real-valued for any $\lambda \in \mathbb{R}$ when $m \geq 3$, we may assume that ``$\Rightarrow$'' in \eqref{eqn:m1} and \eqref{eqn:m2} are under $\mathbb{N}_{3,n}$ for $n \geq 3$, noting that $\lim_{n \to \infty} \mathbb{P}_n(J \in \mathbb{N}_{3,n} \setminus \mathbb{M}_{n,\delta}) = 0$. Now, to obtain a concise expression for the transformed version of the Erd\H{o}s-Kac theorem, we examine the case where $\lambda = 1/2$, which is a natural choice as it makes the denominator of \eqref{eqn:m1} equal to $1/2$. In other words, $\lambda = 1/2$ achieves a variance stabilization of the Erd\H{o}s-Kac theorem. In particular, the square-root transformation implies that
\begin{align*}
2\left(\sqrt{\omega} - \sqrt{\ell_2}\right) \Rightarrow \Phi,
\end{align*}
which is equivalent to
\begin{align*}
\lim_{n \to \infty}\frac{1}{n}\sum_{m=3}^n I\left(\sqrt{\omega(m)} - \sqrt{\ell_2(m)} \leq 0.5x\right) = \Phi(x),
\end{align*}
$x \in \mathbb{R}$. The fact that $\lambda = 1/2$ is chosen is not surprising as $A_n$ and $B^2_n$ are both $\ell_2(n) + O(1)$ in the context of Theorem \ref{thm:main}. Indeed, the square-root transformation is used as a variance-stabilizing transformation for the Poisson distribution and gamma distribution with the unit scale parameter, where the mean equals the variance \citep{yu2009variance,noguchi2024asymptotic}.

As a result of the variance stabilization, the concise expression after the square-root transformation may be used to come up with a simple rule-of-thumb for the proportion of positive integers outside certain ranges in terms of the absolute difference between $\sqrt{\omega(m)}$ and $\sqrt{\ell_2(m)}$. Specifically, Table \ref{tab:emp} implies that integers having the absolute difference greater than $1.5$ are rare.
\begin{table}[htbp]
\centering
\begin{tabular}{|l|c|l|} 
 \hline
 Criterion & Actual Proportion & Interpretation \\ 
 \hline
 $\vert \sqrt{\omega(m)} - \sqrt{\ell_2(m)} \vert > 0.5$ & $0.3173105$ & About $1/3$ \\  
  \hline
 $\vert \sqrt{\omega(m)} - \sqrt{\ell_2(m)} \vert > 1.0$ & $0.0455003$ & About $1/20$ \\
  \hline
  $\vert \sqrt{\omega(m)} - \sqrt{\ell_2(m)} \vert > 1.5$ & $0.0026998$ & About $1/400$\\  
  \hline
  $\vert \sqrt{\omega(m)} - \sqrt{\ell_2(m)} \vert > 2.0$ & $0.0000633$ & About $1/16000$\\   
 \hline
\end{tabular}
\caption{Proportion of positive integers satisfying each of the four different criteria in terms of the absolute difference between $\sqrt{\omega(m)}$ and $\sqrt{\ell_2(m)}$.}
\label{tab:emp}
\end{table}

As a remark, by the symmetry of the normal distribution, the actual proportion without the absolute value can be easily computed by dividing the actual proportion in Table~\ref{tab:emp} by $2$. That said, in practice, the results in Table~\ref{tab:emp} may not hold for a relatively small $m$, in which case, a further refinement of Theorem \ref{thm:m1} for small $m$ would become necessary (see Section \ref{sec:train} for more details).

To achieve a further refinement, it is useful to augment Theorems \ref{thm:mainomega} and \ref{thm:m1} by introducing local adjustment functions. Now, suppose that arithmetic functions $\tilde{f}$ and $\tilde{g}$ are well defined for some $\mathbb{M}_n \subseteq \mathbb{N}_{1,n}$, $n \geq n_1$, for some $n_1 \in \mathbb{N}$. In addition, let $\mathbb{M}_n$ be such that $\lim_{n \to \infty}\mathbb{P}_n(J \in \mathbb{M}_n) = 1$. Then, we write
\begin{align*}
\tilde{g} = o(\tilde{f})\mbox{ pp}
\end{align*}
if
\begin{align*}
\lim_{n \to \infty}\mathbb{P}_n\left(\left\{\left\lvert\frac{\tilde{g}}{\tilde{f}} \right\rvert \leq \varepsilon\right\} \;\cap\; \left\{J \in \mathbb{M}_n\right\}\right) = 1
\end{align*}
for any $\varepsilon > 0$. Note that $\tilde{g} = o(\tilde{f})\mbox{ pp}$ is equivalent to $\tilde{g}/\tilde{f} \Rightarrow 0$. As a remark, the notation ``pp'' (\textit{presque partout}) is chosen here as Chapter III.3 of \cite{tenenbaum2015introduction} uses the ``pp'' notation for the same purpose.

Then, the following generalization with local adjustment functions $f_{\mu}$ and $f_{\sigma}$ (or, $f_{\mu, \lambda}$ and $f_{\sigma, \lambda}$) shows that the normality still holds even after some refinements on the terms related to $\ell_2(m)$ in both the numerator and denominator.

\begin{theorem}\label{thm:m2general} 
Let $f_{\mu} = \ell_2 + o(\sqrt{\ell_2})\mbox{ pp}$ and $f_{\sigma} = \sqrt{\ell_2} + o(\sqrt{\ell_2})\mbox{ pp}$ be some local adjustment functions. Also, let $g$ be a (real-valued) function whose domain contains $[1-\delta, 1 + \delta]$ for some $\delta > 0$ with the assumptions that $g$ is differentiable at $1$ and that $g^\prime(1) \neq 0$. Then,
\begin{align*}
\frac{g(\omega/f_{\mu}) - g(1)}{g^\prime(1)f_{\sigma}/f_{\mu}} \Rightarrow \Phi.
\end{align*}
\end{theorem}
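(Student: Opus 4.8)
The plan is to reduce the statement to an ``adjusted'' Erd\H{o}s-Kac standardization with centering $f_\mu$ and standardizer $f_\sigma$ in place of $\ell_2$ and $\sqrt{\ell_2}$, and then transfer it through $g$ by the same delta-method bridging used in the proof of Theorem~\ref{thm:main}, employing the key function $r$. Concretely, I would first establish $(\omega - f_\mu)/f_\sigma \Rightarrow \Phi$, then record $\omega/f_\mu \Rightarrow 1$, and finally exploit the algebraic identity furnished by $r$ together with Slutsky's theorem (valid under $\mathbb{P}_n$) to pass to the $g$-transformed quantity.

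For the adjusted result, I would write $(\omega - f_\mu)/f_\sigma = (\omega - \ell_2)/f_\sigma + (\ell_2 - f_\mu)/f_\sigma$ and factor each piece through $\sqrt{\ell_2}$. Since $f_\sigma = \sqrt{\ell_2} + o(\sqrt{\ell_2})\mbox{ pp}$ means $(f_\sigma - \sqrt{\ell_2})/\sqrt{\ell_2} \Rightarrow 0$, Slutsky's theorem gives $f_\sigma/\sqrt{\ell_2} \Rightarrow 1$ and hence $\sqrt{\ell_2}/f_\sigma \Rightarrow 1$. Combining this with the Erd\H{o}s-Kac limit \eqref{eqn:ek3} yields $(\omega-\ell_2)/f_\sigma = [(\omega-\ell_2)/\sqrt{\ell_2}](\sqrt{\ell_2}/f_\sigma)\Rightarrow \Phi$, while the assumption $f_\mu = \ell_2 + o(\sqrt{\ell_2})\mbox{ pp}$ gives $(\ell_2 - f_\mu)/\sqrt{\ell_2}\Rightarrow 0$ and therefore $(\ell_2 - f_\mu)/f_\sigma \Rightarrow 0$; adding the two pieces produces $(\omega - f_\mu)/f_\sigma \Rightarrow \Phi$. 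Along the way I would also record $\omega/f_\mu \Rightarrow 1$, which follows from the Hardy-Ramanujan limit $\omega/\ell_2 \Rightarrow 1$ together with $\ell_2/f_\mu \Rightarrow 1$, the latter because $(f_\mu - \ell_2)/\ell_2 = [(f_\mu-\ell_2)/\sqrt{\ell_2}](1/\sqrt{\ell_2})\Rightarrow 0$.

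To transfer through $g$, I would use the defining identity of the key function, namely $g(x) - g(1) = (g^\prime(1) + r(x))(x-1)$, evaluated at $x = \omega/f_\mu$. Dividing by $g^\prime(1) f_\sigma/f_\mu$ and using $\omega/f_\mu - 1 = (\omega - f_\mu)/f_\mu$ collapses the standardizer and produces the factorization
\begin{align*}
\frac{g(\omega/f_\mu) - g(1)}{g^\prime(1) f_\sigma/f_\mu} = \frac{\omega - f_\mu}{f_\sigma}\left(1 + \frac{r(\omega/f_\mu)}{g^\prime(1)}\right).
\end{align*}
Since $\omega/f_\mu \Rightarrow 1$ and $r$ is continuous at $1$ with $r(1) = 0$, the argument of Lemma~\ref{lem:main} (restricting to the domain on which $\omega/f_\mu \in [1-\delta, 1+\delta]$) gives $r(\omega/f_\mu) \Rightarrow 0$, so the second factor tends to $1$. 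A final application of Slutsky's theorem to the product with the first factor $\Rightarrow \Phi$ then yields the claim.

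The main obstacle I anticipate is the domain bookkeeping rather than any analytic difficulty. The quantity $r(\omega/f_\mu)$ is only meaningful where $\omega/f_\mu$ lies in $[1-\delta, 1+\delta]$, so I must work on the set $\mathbb{M}_{n,\delta} = \{m \colon \omega(m)/f_\mu(m) \in [1-\delta, 1+\delta]\}$ intersected with the domains on which $f_\mu$ and $f_\sigma$ are defined, and verify that its $\mathbb{P}_n$-measure tends to $1$ (which is precisely where $\omega/f_\mu \Rightarrow 1$ is used). Each of the several Slutsky steps carries its own sequence of admissible domains, and these must be intersected consistently, exactly as described for the ``$\Rightarrow$'' notation in Section~\ref{sec:introduction}; keeping this bookkeeping clean, and checking that $f_\sigma > 0$ eventually so the ratios are well defined, is the only delicate part of the argument.
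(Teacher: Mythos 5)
Your proposal is correct and takes essentially the same route as the paper's proof: the identity built from the key function $r$, the Hardy-Ramanujan theorem giving $\omega/f_{\mu} \Rightarrow 1$ and hence $r(\omega/f_{\mu}) \Rightarrow 0$, Slutsky's theorem under $\mathbb{P}_n$, and the reduction of $(\omega - f_{\mu})/f_{\sigma} \Rightarrow \Phi$ to the Erd\H{o}s-Kac limit via the pp assumptions. The only cosmetic differences are the order of the steps and your additive decomposition of $(\omega - f_{\mu})/f_{\sigma}$ in place of the paper's multiplicative factorization, which are equivalent.
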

\begin{proof}
First, note that 
\begin{align*}
\frac{g(\omega/f_{\mu}) - g(1)}{g^\prime(1)f_{\sigma}/f_{\mu}} - \frac{\omega - f_{\mu}}{f_{\sigma}} = \frac{1}{g^\prime(1)}r\left(\frac{\omega}{f_{\mu}}\right)\left(\frac{\omega - f_\mu}{f_\sigma}\right).
\end{align*}
Moreover, $\omega/f_{\mu} \Rightarrow 1$ by the Hardy-Ramanujan theorem implies $r(\omega/f_{\mu}) \Rightarrow 0$. Hence,
\begin{align*}
\frac{g(\omega/f_{\mu}) - g(1)}{g^\prime(1)f_{\mu}/f_{\sigma}} \mbox{ and }
\frac{\omega - f_{\mu}}{f_{\sigma}}
\end{align*}
have the same distribution by Slutsky's theorem. Also,
\begin{align*}
\frac{\omega - f_\mu}{f_\sigma} = \left(\frac{\sqrt{\ell_2}}{\sqrt{\ell_2} + o(\sqrt{\ell_2})}\right)\left(\frac{\omega - \ell_2}{\sqrt{\ell_2}} + \frac{o(\sqrt{\ell_2})}{\sqrt{\ell_2}}\right) \mbox{ pp } \Rightarrow \Phi
\end{align*}
by Slutsky's theorem implies that the result holds. 
\end{proof}

The following corollary is a direct application of Theorem \ref{thm:m2general} with the Box-Cox transformation.
\begin{corollary}\label{cor:m2o} 
Let $f_{\mu, \lambda} = \ell_2 + o(\sqrt{\ell_2})\mbox{ pp}$ and $f_{\sigma, \lambda} = \ell_2^{\lambda - 1/2} + o(\ell_2^{\lambda - 1/2})\mbox{ pp}$ be some local adjustment functions. Then, when $\lambda \neq 0$,
\begin{align*}
\frac{\omega^\lambda - f_{\mu, \lambda}^\lambda}{\lambda f_{\sigma, \lambda}} \Rightarrow \Phi,
\end{align*}
and when $\lambda = 0$,
\begin{align*}
\frac{\log\omega - \log f_{\mu, \lambda}}{f_{\sigma, \lambda}} \Rightarrow \Phi.
\end{align*}
\end{corollary}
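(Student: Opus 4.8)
The plan is to invoke Theorem~\ref{thm:m2general} directly, taking $g = y_\lambda$, the Box-Cox transformation, and to reconcile the adjustment functions of the corollary with those required by that theorem. First I would record the elementary behaviour of $y_\lambda$ at $1$: in both cases $y_\lambda(1) = 0$, while $y_\lambda'(x) = x^{\lambda - 1}$ for $\lambda \neq 0$ and $y_0'(x) = 1/x$, so that $y_\lambda'(1) = 1 \neq 0$ in every case. Thus $g = y_\lambda$ is differentiable at $1$ with nonvanishing derivative, and the hypotheses of Theorem~\ref{thm:m2general} on $g$ are met.

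The core of the argument is to match the denominator $\lambda f_{\sigma, \lambda}$ (respectively $f_{\sigma, \lambda}$ when $\lambda = 0$) against the denominator $g'(1) f_\sigma / f_\mu$ produced by Theorem~\ref{thm:m2general}. I would set $f_\mu := f_{\mu, \lambda}$, which is legitimate at once since both are of the form $\ell_2 + o(\sqrt{\ell_2})$ pp, and define the standardizer $f_\sigma := f_{\sigma, \lambda}\, f_{\mu, \lambda}^{\,1 - \lambda}$. This choice is engineered so that the algebra collapses: for $\lambda \neq 0$ one substitutes $g(\omega/f_\mu) = (\omega^\lambda - f_{\mu,\lambda}^\lambda)/(\lambda f_{\mu,\lambda}^\lambda)$ and $g(1) = 0$, whereupon the identity $f_{\mu,\lambda}^{\lambda - 1} f_\sigma = f_{\sigma, \lambda}$ reduces the expression from Theorem~\ref{thm:m2general} to exactly $(\omega^\lambda - f_{\mu,\lambda}^\lambda)/(\lambda f_{\sigma, \lambda})$; for $\lambda = 0$ one uses $g(\omega/f_\mu) = \log \omega - \log f_{\mu,\lambda}$ together with $f_\sigma = f_{\sigma, \lambda} f_{\mu, \lambda}$, which simplifies the ratio to $(\log \omega - \log f_{\mu, \lambda})/f_{\sigma, \lambda}$.

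The hard part will be the one genuinely nontrivial verification, namely that this constructed $f_\sigma$ still satisfies $f_\sigma = \sqrt{\ell_2} + o(\sqrt{\ell_2})$ pp, equivalently $f_\sigma/\sqrt{\ell_2} \Rightarrow 1$. Here I would write $f_\sigma/\sqrt{\ell_2} = (f_{\sigma,\lambda}/\ell_2^{\lambda - 1/2})(f_{\mu,\lambda}/\ell_2)^{1 - \lambda}$, using the bookkeeping $\ell_2^{\lambda - 1/2}\,\ell_2^{1 - \lambda}/\ell_2^{1/2} = 1$. The corollary's hypotheses supply $f_{\sigma,\lambda}/\ell_2^{\lambda - 1/2} \Rightarrow 1$ and $f_{\mu,\lambda}/\ell_2 \Rightarrow 1$; feeding the latter through the map $x \mapsto x^{1 - \lambda}$, continuous at $1$, gives $(f_{\mu,\lambda}/\ell_2)^{1 - \lambda} \Rightarrow 1$, and Slutsky's theorem in the $\mathbb{P}_n$ form already established yields the product $\Rightarrow 1$. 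The delicate points to address are that $f_{\mu,\lambda} > 0$ pp, so that the non-integer powers $f_{\mu,\lambda}^{1 - \lambda}$ and $f_{\mu,\lambda}^\lambda$ are well defined (this follows from $f_{\mu,\lambda}$ lying close to $\ell_2 > 0$), and that $\ell_2 \to \infty$ pp, which is what forces $(1 + o(1/\sqrt{\ell_2}))^{1 - \lambda} \Rightarrow 1$ irrespective of how large $|1 - \lambda|$ may be.

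Finally I would assemble the pieces: with $f_\mu$ and $f_\sigma$ confirmed to satisfy the hypotheses of Theorem~\ref{thm:m2general} and $g = y_\lambda$, that theorem delivers convergence to $\Phi$, and the simplifications of the second paragraph convert the conclusion into the two displayed statements, completing the proof.
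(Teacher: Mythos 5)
Your proposal is correct and follows exactly the route the paper intends: the paper gives no separate proof, stating only that the corollary is ``a direct application of Theorem~\ref{thm:m2general} with the Box-Cox transformation,'' and your argument is precisely that application with the details filled in. Your choice $f_\sigma = f_{\sigma,\lambda} f_{\mu,\lambda}^{1-\lambda}$, together with the verification via $f_{\mu,\lambda}/\ell_2 \Rightarrow 1$, continuity of $x \mapsto x^{1-\lambda}$ at $1$, and Slutsky's theorem that $f_\sigma = \sqrt{\ell_2} + o(\sqrt{\ell_2})$ pp, is a correct and clean way to make the denominators match exactly.
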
 

Similarly, the following corollary, which replaces $\ell_2$ with $\omega$ in the denominator, also holds.
\begin{corollary}\label{cor:m2} 
Let $f_{\mu, \lambda} = \ell_2 + o(\sqrt{\ell_2})\mbox{ pp}$ and $f_{\sigma, \lambda} = \omega^{\lambda - 1/2} + o(\ell_2^{\lambda - 1/2})\mbox{ pp}$ be some local adjustment functions. Then, when $\lambda \neq 0$,
\begin{align*}
\frac{\omega^\lambda - f_{\mu, \lambda}^\lambda}{\lambda f_{\sigma, \lambda}} \Rightarrow \Phi,
\end{align*}
and when $\lambda = 0$,
\begin{align*}
\frac{\log\omega - \log f_{\mu, \lambda}}{f_{\sigma, \lambda}} \Rightarrow \Phi.
\end{align*}
\end{corollary}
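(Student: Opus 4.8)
The plan is to deduce Corollary \ref{cor:m2} directly from Corollary \ref{cor:m2o} by showing that the two versions of the local adjustment function $f_{\sigma,\lambda}$ are in fact of the same form. The only difference between the two statements is that the standardizer $\ell_2^{\lambda - 1/2}$ appearing in Corollary \ref{cor:m2o} is replaced by $\omega^{\lambda - 1/2}$ in Corollary \ref{cor:m2}. Hence it suffices to establish the single asymptotic equivalence
\begin{align*}
\omega^{\lambda - 1/2} = \ell_2^{\lambda - 1/2} + o(\ell_2^{\lambda - 1/2}) \mbox{ pp},
\end{align*}
after which the $o(\ell_2^{\lambda - 1/2})\mbox{ pp}$ term already present in $f_{\sigma,\lambda}$ absorbs the discrepancy and the hypotheses of Corollary \ref{cor:m2o} are met verbatim.

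First I would recast the claimed equivalence in normalized form via
\begin{align*}
\frac{\omega^{\lambda - 1/2} - \ell_2^{\lambda - 1/2}}{\ell_2^{\lambda - 1/2}} = \left(\frac{\omega}{\ell_2}\right)^{\lambda - 1/2} - 1,
\end{align*}
so that $\omega^{\lambda - 1/2} = \ell_2^{\lambda - 1/2} + o(\ell_2^{\lambda - 1/2})\mbox{ pp}$ is equivalent to $(\omega/\ell_2)^{\lambda - 1/2} - 1 \Rightarrow 0$. Since the Hardy-Ramanujan theorem gives $\omega/\ell_2 \Rightarrow 1$ and the map $t \mapsto t^{\lambda - 1/2}$ is continuous at $t = 1$ for every real exponent, this convergence follows by a continuous-mapping argument under $\mathbb{P}_n$ entirely analogous to the proof of Lemma \ref{lem:main}: for any $\varepsilon > 0$, continuity supplies a $\delta^\star \in (0,\delta)$ such that $\vert t - 1 \vert < \delta^\star$ forces $\vert t^{\lambda - 1/2} - 1 \vert < \varepsilon$, whence the event $\{\vert (\omega/\ell_2)^{\lambda - 1/2} - 1 \vert \geq \varepsilon\} \cap \{J \in \mathbb{N}_{3,n}\}$ is contained in $\{\vert \omega/\ell_2 - 1 \vert \geq \delta^\star\} \cap \{J \in \mathbb{N}_{3,n}\}$, whose $\mathbb{P}_n$-measure tends to $0$.

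With this in hand, I would combine the two error terms by writing
\begin{align*}
f_{\sigma,\lambda} = \omega^{\lambda - 1/2} + o(\ell_2^{\lambda - 1/2}) = \ell_2^{\lambda - 1/2} + o(\ell_2^{\lambda - 1/2}) + o(\ell_2^{\lambda - 1/2}) = \ell_2^{\lambda - 1/2} + o(\ell_2^{\lambda - 1/2}) \mbox{ pp},
\end{align*}
where the sum of two $o(\ell_2^{\lambda - 1/2})\mbox{ pp}$ terms is again $o(\ell_2^{\lambda - 1/2})\mbox{ pp}$ by the union-of-domains argument used throughout for combining convergences under $\mathbb{P}_n$. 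Thus the $f_{\sigma,\lambda}$ of Corollary \ref{cor:m2} satisfies precisely the hypothesis imposed on $f_{\sigma,\lambda}$ in Corollary \ref{cor:m2o}, so both displayed conclusions (for $\lambda \neq 0$ and for $\lambda = 0$) follow at once.

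I expect the main obstacle to be a matter of care rather than depth, namely making the continuous-mapping step rigorous within the ``$\Rightarrow$'' framework — tracking the domains on which $\omega/\ell_2$ stays within $\delta$ of $1$ and confirming that their $\mathbb{P}_n$-measures tend to $1$ — and verifying that the additivity of the ``pp'' error terms is legitimate under $\mathbb{P}_n$. Both points are settled by mimicking Lemma \ref{lem:main} together with the union-of-domains discussion of Section \ref{sec:introduction}, and no number-theoretic input beyond the Hardy-Ramanujan theorem is required.
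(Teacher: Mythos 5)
Your proposal is correct and takes essentially the same route as the paper's own proof: both reduce Corollary \ref{cor:m2} to Corollary \ref{cor:m2o} by invoking the Hardy-Ramanujan theorem and continuity of $t \mapsto t^{\lambda - 1/2}$ at $t = 1$ to conclude $(\omega/\ell_2)^{\lambda-1/2} \Rightarrow 1$, i.e., $\omega^{\lambda - 1/2} = \ell_2^{\lambda - 1/2} + o(\ell_2^{\lambda - 1/2})\mbox{ pp}$. You merely spell out two steps the paper leaves implicit (the continuous-mapping argument under $\mathbb{P}_n$ and the absorption of the two $o(\ell_2^{\lambda-1/2})\mbox{ pp}$ terms into one), both of which are valid.
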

\begin{proof}
By the Hardy-Ramanujan theorem, $\omega/\ell_2 \Rightarrow 1$ so that $(\omega/\ell_2)^{\lambda - 1/2} \Rightarrow 1$, implying that $\omega^{\lambda - 1/2} = \ell_2^{\lambda - 1/2} + o(\ell_2^{\lambda - 1/2})\mbox{ pp}$ holds. Hence, the desired result follows from Corollary~\ref{cor:m2o}.
\end{proof}

These local adjustment functions provide a basis for training the mean and standard deviation of (transformed) $\omega$ to obtain reliable interval estimates for $\omega$ across a wide range of positive integers $m$. Estimation of $f_{\mu, \lambda}$ and $f_{\sigma, \lambda}$ is considered in detail in Section \ref{sec:train}. Specifically, Corollary \ref{cor:m2o} is related to Billingsley's interval estimate with different power transformations on $\omega$, and Corollary \ref{cor:m2} is related to the score interval estimate of $\omega$.

\section{Optimizing the interval estimate}
\label{sec:interval}

Let $z_{\alpha/2} > 0$, $\alpha \in (0, 1)$, be such that $\Phi(z_{\alpha/2}) = 1-\alpha/2$. Based on \eqref{eqn:ek3}, a two-sided $100(1-\alpha)\%$ level interval estimate,
\begin{align}\label{eqn:int0}
\left[\ell_2(m) - z_{\alpha/2}\sqrt{\ell_2(m)}, \ell_2(m) + z_{\alpha/2}\sqrt{\ell_2(m)}\right]
\end{align}
for $\omega$ in the vicinity of a sufficiently large $m$, is suggested in \cite{billingsley1969central}. The derivation is straightforward as \eqref{eqn:ek3} implies that
\begin{align*}
\lim_{n \to \infty}\mathbb{P}_n\left(\left\{\omega \in \left[\ell_2 - z_{\alpha/2}\sqrt{\ell_2}, \ell_2 + z_{\alpha/2}\sqrt{\ell_2}\right]\right\} \;\cap\; \left\{J \in \mathbb{N}_{3,n}\right\}\right) = 1-\alpha.
\end{align*}

To optimize the interval estimate for $\omega$, a typical approach taken is to choose an appropriate $\lambda$ so that the width of the interval estimate for $\omega$ in the vicinity of $m$ is minimized. To derive an optimal $\lambda$, first of all, recall that Theorem \ref{thm:m1} implies that
\begin{align}\label{eqn:int11}
\lim_{n \to \infty}\mathbb{P}_n\left(\left\{\omega \in \left[L_{\lambda, \alpha}, U_{\lambda, \alpha}\right]\right\} \;\cap\; \left\{J \in \mathbb{N}_{3,n}\right\}\right) = 1-\alpha,
\end{align}
where, for a sufficiently large $m$,
\begin{align*}
L_{\lambda, \alpha}(m) = \begin{cases}
    \ell_2(m)[1 - z_{\alpha/2}\lambda/\sqrt{\ell_2(m)}]^{1/\lambda}, & \lambda \neq 0 \\
    \ell_2(m)\exp(-z_{\alpha/2}/\sqrt{\ell_2(m)}), & \lambda = 0
  \end{cases},
\end{align*}
and
\begin{align*}
U_{\lambda, \alpha}(m) = \begin{cases}
    \ell_2(m)[1 + z_{\alpha/2}\lambda/\sqrt{\ell_2(m)}]^{1/\lambda}, & \lambda \neq 0 \\
    \ell_2(m)\exp(z_{\alpha/2}/\sqrt{\ell_2(m)}), & \lambda = 0
  \end{cases}.
\end{align*}

Therefore, for a sufficiently large $m$, the width of the interval estimate as a function of $\lambda$ is given by
\begin{align*}
\Delta_{\alpha,m}(\lambda) = 
  \begin{cases}
    \ell_2(m)[[1 + z_{\alpha/2}\lambda/\sqrt{\ell_2(m)}]^{1/\lambda} - [1 - z_{\alpha/2}\lambda/\sqrt{\ell_2(m)}]^{1/\lambda}], & \lambda \neq 0 \\
    \ell_2(m)[\exp(z_{\alpha/2}/\sqrt{\ell_2(m)}) - \exp(-z_{\alpha/2}/\sqrt{\ell_2(m)})], & \lambda = 0
  \end{cases},
\end{align*}
where $\Delta_{\alpha,m}$ is continuous at $\lambda = 0$. Then, we obtain the following result.
\begin{theorem}\label{thm:optim} 
The width of the two-sided $100(1-\alpha)\%$ level interval estimate $[L_{\lambda, \alpha}(m), U_{\lambda, \alpha}(m)]$ is minimized asymptotically at $\lambda = 3/4$ as $\ell_2(m) \to \infty$.
\end{theorem}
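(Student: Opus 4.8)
The plan is to reduce the minimization to a Taylor expansion in the small parameter $t = z_{\alpha/2}/\sqrt{\ell_2(m)}$, which tends to $0$ as $\ell_2(m) \to \infty$. Writing $L = \ell_2(m)$, the $\lambda \neq 0$ branch reads $\Delta_{\alpha,m}(\lambda) = L\bigl[(1+\lambda t)^{1/\lambda} - (1-\lambda t)^{1/\lambda}\bigr]$. First I would write $(1\pm\lambda t)^{1/\lambda} = \exp\bigl(\lambda^{-1}\log(1\pm\lambda t)\bigr)$ and expand the logarithm as a power series in $t$ (valid once $|\lambda t| < 1$), splitting the exponent into its even part $S(\lambda,t) = -\tfrac{\lambda}{2}t^2 + O(t^4)$ and its odd part $D(\lambda,t) = t + \tfrac{\lambda^2}{3}t^3 + O(t^5)$. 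This produces the clean factorization $\Delta_{\alpha,m}(\lambda) = 2L\,e^{S}\sinh(D)$, which isolates the $\lambda$-dependence conveniently.

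Next I would multiply out $e^{S} = 1 - \tfrac{\lambda}{2}t^2 + O(t^4)$ and $\sinh(D) = t + \bigl(\tfrac{\lambda^2}{3} + \tfrac{1}{6}\bigr)t^3 + O(t^5)$ to obtain
\[
\Delta_{\alpha,m}(\lambda) = 2z_{\alpha/2}\sqrt{\ell_2(m)}\left[1 + c(\lambda)\,t^2 + O(t^4)\right], \qquad c(\lambda) = \frac{\lambda^2}{3} - \frac{\lambda}{2} + \frac{1}{6}.
\]
The key structural observation is that the leading factor $2z_{\alpha/2}\sqrt{\ell_2(m)}$ is independent of $\lambda$, so the entire asymptotic dependence on $\lambda$ is carried by the coefficient $c(\lambda)$ of the first correction term. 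Since $c$ is a strictly convex quadratic with $c'(\lambda) = \tfrac{2\lambda}{3} - \tfrac12$ and $c''(\lambda) = \tfrac23 > 0$, it attains its unique minimum at $\lambda = 3/4$. As a consistency check for the $\lambda = 0$ branch, one has $L\bigl[e^{t} - e^{-t}\bigr] = 2L\sinh(t) = 2z_{\alpha/2}\sqrt{\ell_2(m)}\bigl[1 + \tfrac16 t^2 + O(t^4)\bigr]$, which matches $c(0) = \tfrac16$; together with the stated continuity of $\Delta_{\alpha,m}$ at $\lambda = 0$, this confirms the single expansion covers all $\lambda$.

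The main obstacle is making ``minimized asymptotically at $\lambda = 3/4$'' precise, in the sense that the true minimizer $\lambda^\star(m)$ of $\Delta_{\alpha,m}$ converges to $3/4$ as $\ell_2(m) \to \infty$, rather than merely that $c(\lambda)$ is minimized there. I would secure this by proving the expansion above with a remainder $O(t^4)$ that is \emph{uniform} in $\lambda$ over a fixed compact neighborhood of $3/4$, then differentiating in $\lambda$ to get $\partial_\lambda \Delta_{\alpha,m}(\lambda) = 2z_{\alpha/2}\sqrt{\ell_2(m)}\bigl[c'(\lambda)\,t^2 + O(t^4)\bigr]$, again uniformly. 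Because $c'$ has a simple (transversal) zero at $\lambda = 3/4$, a standard implicit-function or sign-change argument then shows that for all sufficiently small $t$ the unique zero of $\partial_\lambda \Delta_{\alpha,m}$ lies arbitrarily close to $3/4$, giving $\lambda^\star(m) \to 3/4$. The delicate technical point is the uniform control of the remainder and of its $\lambda$-derivative on the compact set (using $|\lambda t| < 1$ there); the algebra leading to $c(\lambda)$ and its minimization is otherwise routine.
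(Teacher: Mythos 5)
Your proposal is correct and takes essentially the same approach as the paper: both expand $\Delta_{\alpha,m}(\lambda)$ in powers of $t = z_{\alpha/2}/\sqrt{\ell_2(m)}$ and minimize the $\lambda$-dependent coefficient of the first correction term, your $c(\lambda) = \tfrac{\lambda^2}{3} - \tfrac{\lambda}{2} + \tfrac{1}{6}$ being exactly $\tfrac{1}{6}(\lambda-1)(2\lambda-1)$, which matches the paper's expansion $\Delta_{\alpha,m}(\lambda) = 2z_{\alpha/2}\sqrt{\ell_2(m)} + \tfrac{z_{\alpha/2}^3}{3\sqrt{\ell_2(m)}}(\lambda-1)(2\lambda-1) + O([\ell_2(m)]^{-3/2})$. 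The only difference is one of rigor, not of method: your uniform control of the remainder and the sign-change argument showing the true minimizer $\lambda^\star(m) \to 3/4$ supply a justification that the paper's two-line proof leaves implicit.
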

\begin{proof}
An asymptotic expansion of $\Delta_{\alpha,m}(\lambda)$ as $\ell_2(m) \to \infty$ is given by
\begin{align*}
\Delta_{\alpha, m}(\lambda) = 
2z_{\alpha/2}\sqrt{\ell_2(m)}+\frac{z_{\alpha/2}^3}{3\sqrt{\ell_2(m)}}(\lambda-1)(2\lambda-1) + O\left(\frac{1}{[\ell_2(m)]^{3/2}}\right).
\end{align*}
Therefore, by considering $(\lambda- 1)(2\lambda - 1)$, $\lambda = 3/4$ asymptotically minimizes the width of the two-sided $100(1-\alpha)\%$ level interval estimate. 
\end{proof}
In other words, an asymptotically optimal $100(1-\alpha)\%$ level interval estimate for $\omega$ in the vicinity of $m$ is given by
\begin{align}\label{eqn:int34}
&\left[L_{3/4, \alpha}(m), U_{3/4, \alpha}(m)\right] \nonumber\\ &= \left[\ell_2(m)[1 - 3z_{\alpha/2}/(4\sqrt{\ell_2(m)})]^{4/3}, \ell_2(m)[1 + 3z_{\alpha/2}/(4\sqrt{\ell_2(m)})]^{4/3}\right].
\end{align}

Although Theorem~\ref{thm:optim} is an asymptotic result, for a finite $m$, the optimal value of $\lambda$ may vary depending on the value of $m$ and $z_{\alpha/2}$. In general, as implied by the asymptotic expansion of $\Delta_{\alpha,m}(\lambda)$, the optimal value of $\lambda$ converges to $3/4$ quickly for a small value of $z_{\alpha/2}$. However, the convergence to $3/4$ may require a much larger value of $m$ when $z_{\alpha/2}$ is relatively large. 

To see how quickly the optimal value of $\lambda$ converges to $3/4$ for various $m$ and $z_{\alpha/2}$, we only examine cases where the lower bound $L_{\lambda,\alpha}(m)$ is greater than $1$, which is when the interval estimate provides useful information about $\omega$ since $\omega(m) \geq 1$ for all $m$. To this end, we define the following.
\begin{definition}[Inclusion and Exclusion Points]
Let $k \in \mathbb{N}$. We call $U^{-1}_{\lambda,\alpha}(k)$ the $k$-th inclusion point. Similarly, we call $L^{-1}_{\lambda,\alpha}(k)$ the $k$-th exclusion point.
\end{definition}
The inclusion and exclusion points may be easily understood by using the floor and ceiling function, denoted by $\lfloor \cdot \rfloor$ and $\lceil \cdot \rceil$, respectively. For example, $\lceil U^{-1}_{\lambda,\alpha}(k) \rceil$ is the positive integer for which $k$ is included in the interval estimate $[L_{\lambda,\alpha}(m), U_{\lambda,\alpha}(m)]$ for the first time. Similarly, $\lfloor L^{-1}_{\lambda,\alpha}(k) \rfloor$ is the positive integer for which $k$ is included in the interval estimate $[L_{\lambda,\alpha}(m), U_{\lambda,\alpha}(m)]$ for the last time.
When $\lambda = 1$ and $\lambda = 1/2$, closed-form solutions for both $\ell_2(U^{-1}_{\lambda,\alpha}(k))$ and $\ell_2(L^{-1}_{\lambda,\alpha}(k))$ can be easily derived. For instance, when $\lambda = 1$,
\begin{equation*}
\begin{aligned}
\ell_2(U^{-1}_{1,\alpha}(k)) &= (k + z_{\alpha/2}^2/2) - z_{\alpha/2}\sqrt{k + z_{\alpha/2}^2/4},\\
\ell_2(L^{-1}_{1,\alpha}(k)) &= (k + z_{\alpha/2}^2/2) + z_{\alpha/2}\sqrt{k + z_{\alpha/2}^2/4}.
\end{aligned}
\end{equation*}
On the other hand, when $\lambda = 1/2$,
\begin{equation*}
\begin{aligned}
\ell_2(U^{-1}_{1/2,\alpha}(k)) &= \left(\sqrt{k} -  z_{\alpha/2}/2\right)^2,\\
\ell_2(L^{-1}_{1/2,\alpha}(k)) &= \left(\sqrt{k} +  z_{\alpha/2}/2\right)^2,
\end{aligned}
\end{equation*}
where the above expression for $\ell_2(U^{-1}_{1/2,\alpha}(k))$ holds when $z_{\alpha/2}^2 \leq 4k$, corresponding to the fact that $\sqrt{\ell_2(m)}$ must be a real number. Next, let us consider the first exclusion point, $L^{-1}_{\lambda,\alpha}(1)$, which plays an important role as $\omega(m) \geq 1$, $m \in \mathbb{N}$. Specifically, when $[L_{\lambda, \alpha}(m), U_{\lambda, \alpha}(m)]$ is given in the form described in \eqref{eqn:int11}, $\theta = L^{-1}_{\lambda,\alpha}(1)$ is such that
\begin{align*}
z_{\alpha/2} = \begin{cases}
    [\ell_2(\theta)]^{1/2 - \lambda}\left(\frac{[\ell_2(\theta)]^\lambda - 1}{\lambda}\right), & \lambda \neq 0 \\
    \sqrt{\ell_2(\theta)}\log(\ell_2(\theta)), & \lambda = 0
  \end{cases}.
\end{align*}
To give a sense of how the choice of $z_{\alpha/2}$ affects $L^{-1}_{\lambda,\alpha}(1)$, Table \ref{tab:eff} summarizes the results for $\ell_2(L^{-1}_{3/4,\alpha}(1))$ and the corresponding $L^{-1}_{3/4, \alpha}(1)$.
\begin{table}[htbp]
\centering
\begin{tabular}{|c|c|c|} 
 \hline
 $z_{\alpha/2}$ &$\ell_2(L^{-1}_{3/4,\alpha}(1))$ & $L^{-1}_{3/4,\alpha}(1)$ \\ 
 \hline
 1 & $2.4104$ & $6.9\cdot 10^4$ \\  
  \hline
 2 & $4.7422$ & $6.4\cdot 10^{49}$ \\
  \hline
 3 & $8.0830$ & $4.9\cdot 10^{1406}$\\  
  \hline
\end{tabular}
\caption{The value of $\ell_2(L^{-1}_{3/4,\alpha}(1))$ and its corresponding $L^{-1}_{3/4,\alpha}(1)$.}
\label{tab:eff}
\end{table}

Now, let us examine how quickly the optimal value of $\lambda$ converges to $3/4$ by examining how the optimal value of $\lambda$ changes beyond the point at which the lower bound exceeds $1$. For example, when $z_{\alpha/2} = 2$, the left panel of Figure~\ref{fig:lambda} shows that the optimal value of $\lambda$ is in the vicinity of $3/4$ when $\ell_2(m) \geq 5$ (i.e., $m \geq 2.9\cdot 10^{64}$) approximately, noting that $\ell_2(L^{-1}_{3/4,\alpha}(1)) \approx 4.817$ (i.e., $L^{-1}_{3/4,\alpha}(1) \approx 4.7\cdot 10^{53}$). On the other hand, when $z_{\alpha/2} = 3$, the right panel of Figure~\ref{fig:lambda} shows that the optimal value of $\lambda$ is in the vicinity of $3/4$ when $\ell_2(m) \geq 10$ (i.e., $m \geq 9.4\cdot 10^{9565}$) approximately, noting that $\ell_2(L^{-1}_{3/4,\alpha}(1)) \approx 8.383$ (i.e., $L^{-1}_{3/4,\alpha}(1) \approx 6.0\cdot 10^{1898}$).
\begin{figure}
    \centering
    \includegraphics[width=0.45\textwidth]{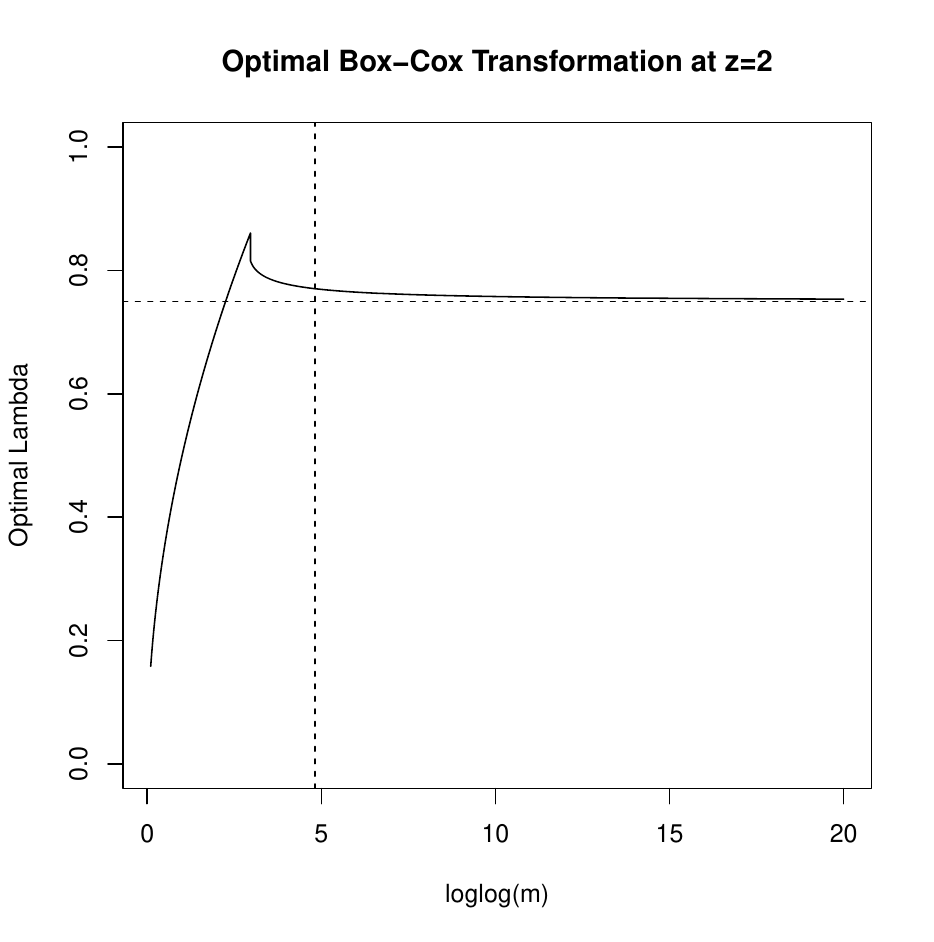}
    \includegraphics[width=0.45\textwidth]{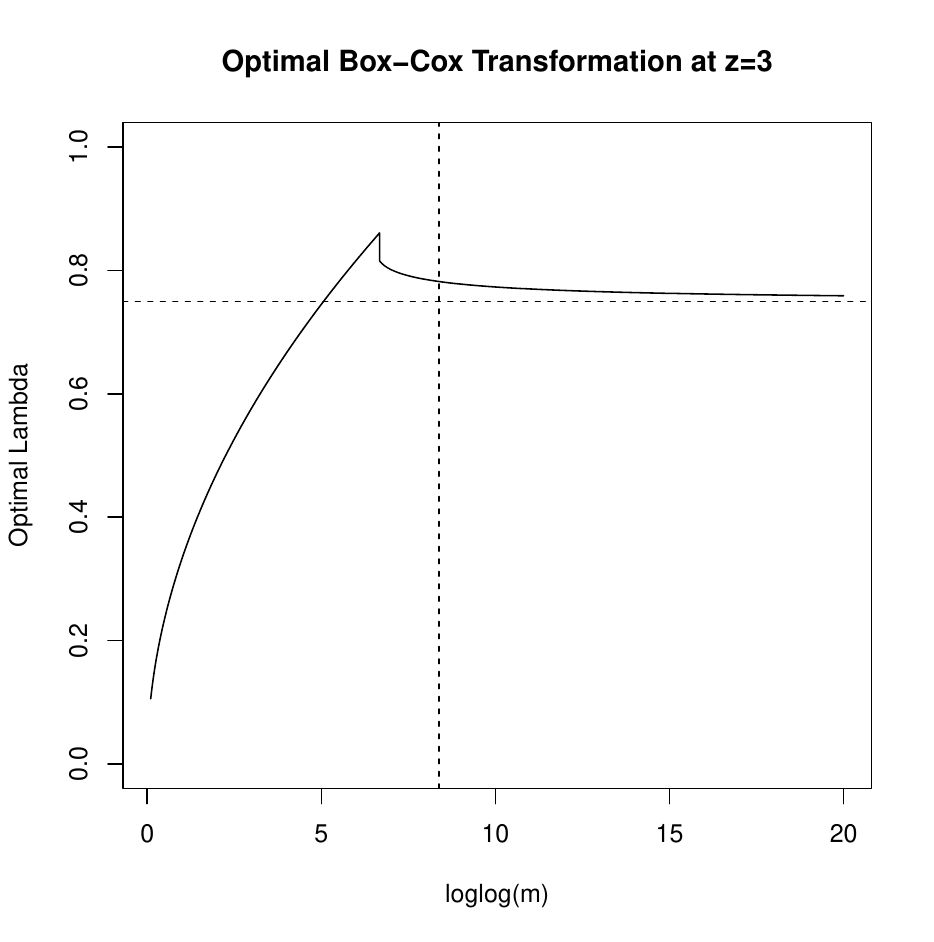}
    \caption{The optimal value of $\lambda$ as a function of $\ell_2(m)$ at $z_{\alpha/2} = 2$ (left) and at $z_{\alpha/2} = 3$ (right). The horizontal dashed line is drawn at $\lambda = 3/4$. The vertical dashed line represents the point at which the lower bound exceeds $1$.}
    \label{fig:lambda}
\end{figure}

\section{Problems with the interval estimates}\label{sec:prob} 
Let $\alpha^\star$ be such that $z_{\alpha^\star/2} = 0.9$, corresponding to $\alpha^\star \approx 0.3681$. An application of the interval estimate \eqref{eqn:int0} shows that $[L_{1,\alpha^\star}(10^{70}), U_{1,\alpha^\star}(10^{70})] \approx [3, 7]$, implying that about $60\%$ of the integers in the vicinity of $10^{70}$ have $3$ to $7$ distinct prime divisors \citep{billingsley1969central}. Curiously, it is also noted in \cite{billingsley1969central} that a computation of this kind is based on the premise that the integers near $10^{70}$ are ``not atypical in their divisibility properties''. Although it is unclear from \cite{billingsley1969central} the situations in which the integers in the vicinity of $m$ are considered \emph{atypical} in their divisibility properties, we describe situations where the interval estimate \eqref{eqn:int34} may become less reliable due to irregularities in the coverage probability. 

Let us recall that the interval estimate of $\omega$ in the vicinity of $m$ is deemed \emph{reliable} when the actual probability (i.e., the coverage probability) that $[L_{\lambda, \alpha}(t), U_{\lambda, \alpha}(t)]$ contains $\omega(t)$, $t \in [m-j, m+j]$ for some $j < m$, is ``close enough'' to the nominal coverage probability $1-\alpha$. That is, by denoting the coverage probability around $m$ by $p_{m, j, \lambda, \alpha}$, i.e.,
\begin{align}\label{eqn:cr}
    p_{m, j, \lambda, \alpha} = \frac{1}{2j+1}\sum_{t=m-j}^{m+j}I_{\lambda, \alpha}(t),
\end{align}
where 
\begin{align*}
    I_{\lambda, \alpha}(t)=I\left(\omega(t) \in \left[L_{\lambda,\alpha}(t), U_{\lambda,\alpha}(t)\right]\right),
\end{align*}
$p_{m, j, \lambda, \alpha} \approx 1 - \alpha$ indicates that the interval estimate is reliable in the vicinity of $m$; otherwise, it is deemed unreliable. Although it is somewhat subjective when we describe what we mean by ``close enough'', Bradley's negligible criterion suggests $1 - 1.1\alpha \leq p_{m, j, \lambda, \alpha} \leq 1 - 0.9\alpha$ in the context of hypothesis testing \citep{bradley1978robustness,ramsey1980exact}. That is, for $\alpha^\star$, $1 - \alpha^\star = 2\Phi(0.9)-1 \approx 0.6319$, corresponding to $0.5951 \leq p_{m, j, \lambda, \alpha^\star} \leq 0.6687$. In other words, if $p_{m, j, \lambda, \alpha^\star}$ is off by at most a few percentage points from the nominal coverage probability $1-\alpha^\star$, then the interval estimate may be deemed reliable according to the criterion. On the other hand, a somewhat less stringent criterion, known as Bradley's liberal criterion, suggests $1 - 1.5\alpha \leq p_{m, j, \lambda, \alpha} \leq 1 - 0.5\alpha$ instead. For $\alpha^\star$, it is equivalent to $0.4478 \leq p_{m, j, \lambda, \alpha^\star} \leq 0.8159$, allowing the coverage probability to vary by close to twenty percentage points on both ends from the nominal coverage probability.

To describe the general behavior of the interval estimate, we demonstrate that its coverage probability is greatly affected by the inclusion and exclusion points, as well as $m$ itself. For convenience, we examine the behavior of $[L_{1, \alpha^\star}(m), U_{1, \alpha^\star}(m)]$. In other words, the values of $\lambda$ and $z_{\alpha/2}$ are made consistent with those used in \cite{billingsley1969central}.

\begin{table}[htbp]
\centering
\begin{tabular}{|c|c|c|c|c|} 
 \hline
 $k$ & $\ell_2(L^{-1}_{1,\alpha^\star}(k))$ & $L^{-1}_{1,\alpha^\star}(k)$ & $\ell_2(U^{-1}_{1,\alpha^\star}(k))$ & $U^{-1}_{1,\alpha^\star}(k)$\\ 
 \hline
 1 & $0.4181$ & $4.6\cdot 10^0$ & $2.3919$ & $5.6\cdot 10^4$\\  
  \hline
 2 & $1.0693$ & $1.8\cdot 10^1$ & $3.7407$ & $2.0\cdot 10^{18}$\\
  \hline
 3 & $1.7944$ & $4.1\cdot 10^2$ & $5.0156$ & $2.9\cdot 10^{65}$\\  
  \hline
 4 & $2.5600$ & $4.1\cdot 10^5$ & $6.2500$ & $9.3\cdot 10^{224}$\\  
  \hline
 5 & $3.3522$ & $2.5\cdot 10^{12}$ & $7.4578$ & $6.1\cdot 10^{752}$\\
  \hline
 6 & $4.1636$ & $8.4\cdot 10^{27}$ & $8.6464$ & $1.1\cdot 10^{2471}$\\  
  \hline 
 7 & $4.9896$ & $6.2\cdot 10^{63}$ & $9.8204$ & $1.4\cdot 10^{7993}$\\  
  \hline
 8 & $5.8274$ & $2.7\cdot 10^{147}$ & $10.9826$ & $3.1\cdot 10^{25554}$\\ 
  \hline 
\end{tabular}
\caption{The first $8$ inclusion and exclusion points of $[L_{1, \alpha^\star}(m), U_{1, \alpha^\star}(m)]$.}
\label{tab:ie}
\end{table}

\begin{table}[htbp]
\centering
\begin{tabular}{|c|c|c|c|c|} 
 \hline
 Phase & CP & Diff.\ & Asy.\ CP & Asy.\ Diff.\ \\ 
 \hline
 1 & 0.8090 & -- & 0.5727 & -- \\  
  \hline
 2  & 0.7153 & -0.0937 & 0.4798 & -0.0929\\
  \hline
 3  & 0.6651 & -0.0502 & 0.4513 & -0.0285\\ 
  \hline
 4  & 0.8755 & 0.2104 & 0.6673 & 0.2160\\  
  \hline 
\end{tabular}
\caption{CP: coverage probabilities. Diff.: Differences between the coverage probabilities. Asy.\ CP: coverage probabilities based on the asymptotic formula. Asy.\ Diff.: Differences between coverage probabilities based on the asymptotic formula. Phase 1: $ m = \lfloor U^{-1}_{1,\alpha^\star}(1)-j\rfloor$, Phase 2: $m = \lceil U^{-1}_{1,\alpha^\star}(1)+j \rceil$, Phase 3: $m = \lfloor L^{-1}_{1,\alpha^\star}(4)-j \rfloor$, Phase 4: $m = \lceil L^{-1}_{1,\alpha^\star}(4)+j \rceil$. The coverage probabilities based on the asymptotic formula are computed at the inclusion and exclusion points.}
\label{tab:iedelta}
\end{table}

Table~\ref{tab:ie} shows the results for the first $8$ inclusion and exclusion points of $[L_{1, \alpha^\star}(m), U_{1, \alpha^\star}(m)]$. Interestingly, Figure~\ref{fig:CR} displays that the first inclusion point, $U^{-1}_{1,\alpha^\star}(1) \approx 5.6\cdot 10^4$, corresponds to the point where we see a noticeable increase in the coverage probability. Similarly, the fourth exclusion point, $L^{-1}_{1,\alpha^\star}(4) \approx 4.1\cdot 10^5$, following the first inclusion point corresponds to a noticeable decrease in the coverage probability. To see the magnitudes of the increase and decrease in the coverage probabilities corresponding to these points, Table~\ref{tab:iedelta} shows the computed coverage probabilities and their differences before and after these inclusion and exclusion points. Furthermore, these results are compared to the coverage probabilities computed from the asymptotic formula below. To be more specific, let $\rho_d(m)$ be the number of integers not exceeding $m$ with exactly $d$ distinct prime divisors. Then, the values of the estimated coverage probabilities can be computed using the asymptotic formula for $\rho_d(m)$ given by 
\begin{align}\label{eqn:asymprop}
\hat{\rho}_d(m) = \frac{m\exp(-\ell_2(m))[\ell_2(m)]^{d-1}}{(d-1)!},
\end{align}
where $\hat{\rho}_d(m) \sim \rho_d(m)$ as $m \to \infty$ \citep{landau1900quelques}. The formula \eqref{eqn:asymprop} is a well-known generalization of the prime number theorem as $\hat{\rho}_1(m) = m/\log(m)$. By utilizing \eqref{eqn:asymprop}, the estimated coverage probability is given by
\begin{align}\label{eqn:asympcr}
p^\star_{m,\lambda,\alpha} = \frac{1}{m}\sum_{d=1}^\infty \hat{\rho}_d(m)I_{d, \lambda, \alpha}(m),
\end{align}
where
\begin{align*}
    I_{d, \lambda, \alpha}(m) = I(d \in [L_{\lambda, \alpha}(m), U_{\lambda, \alpha}(m)]).
\end{align*}
Table \ref{tab:iedelta} shows that \eqref{eqn:asympcr} well approximates the differences in the coverage probabilities before and after the inclusion and exclusion points, but it does not well approximate the coverage probability itself due to a relatively small $m$ being used. As a remark, $\hat{\rho}_d(m)/m$ is equivalent to the probability mass function of the shifted Poisson distribution with parameter $\ell_2(m)$ with the minimum of $1$ instead of the usual $0$ due to $d-1$ in the formula \citep{kowalski2021introduction}.

\begin{figure}
    \centering
    \includegraphics[width=0.7\textwidth]{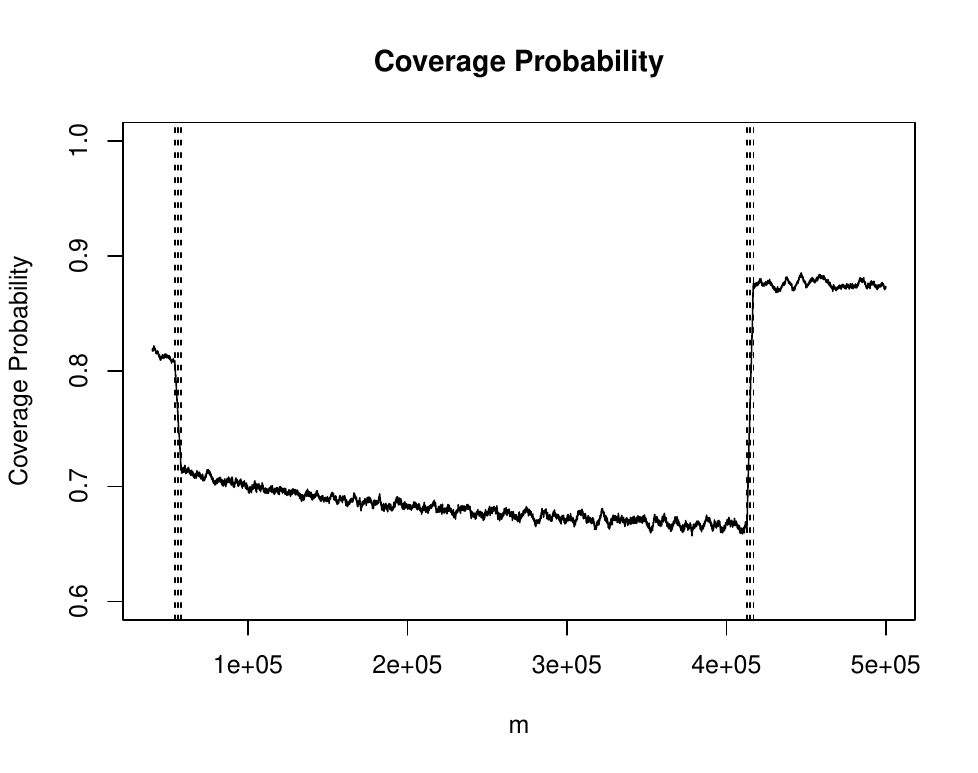}
    \caption{Plot of $p_{m, j, 1, \alpha^\star}$ with $j = 2000$ against $m$ for $m \in [4\cdot 10^4, 5\cdot 10^5]$. The first three vertical lines indicate $U^{-1}_{1,\alpha^\star}(1) - j$, $U^{-1}_{1,\alpha^\star}(1)$, and
    $U^{-1}_{1,\alpha^\star}(1) + j$, and the last three vertical lines indicate $L^{-1}_{1,\alpha^\star}(4) - j$, $L^{-1}_{1,\alpha^\star}(4)$, and
    $L^{-1}_{1,\alpha^\star}(4) + j$.}
    \label{fig:CR}
\end{figure}

The above results indicate that an atypical sequences of integers in the context of interval estimate occurs at around the inclusion and exclusion points. To visualize the results for different values of $z_{\alpha/2}$, here, we pay special attention to sequences of integers not exceeding the upper limit of \eqref{eqn:int0} for various values of $z_{\alpha/2}$. Specifically, let $s_{i,\alpha}$ denote the $i$-th occurrence of $1$ in $I(\omega(m) > U_{1,\alpha}(m))$, $m \geq 3$. That is,
\begin{align*}
     s_{i,\alpha} = \min\left\{n  \colon \sum_{m = 3}^n I(\omega(m) > U_{1,\alpha}(m)) = i\right\}.
\end{align*}
Now, let us consider $\log(s_{i+1,\alpha}/s_{i,\alpha})$, $i \in \mathbb{N}$, which measures the separation between the $i$-th and $(i+1)$-th occurrence of $m$ such that $\omega(m)$ exceeds $U_{1,\alpha}(m)$. Figure~\ref{fig:sep3D} illustrates how $\log(s_{i+1,\alpha}/s_{i,\alpha})$, $i \leq 10000$, change when $z_{\alpha/2} \in [0.5, 1.5]$. The 3D plot suggests that $\log(s_{i+1,\alpha}/s_{i,\alpha})$ forms a staircase, with the locations of sudden interventions caused by the inclusion points changing smoothly across different values of $z_{\alpha/2}$. In other words, the intervention phenomenon caused by the inclusion points is ubiquitous in different values of $z_{\alpha/2}$. At the same time, no other sequences of integers seem to cause a sudden shift in $\log(s_{i+1,\alpha}/s_{i,\alpha})$. To remove the interventions caused by the inclusion and exclusion points, it becomes necessary to modify the interpretation of the interval estimate and coverage probability, which we will discuss in Section \ref{sec:mod}.

\begin{figure}
\includegraphics[width=1\textwidth]{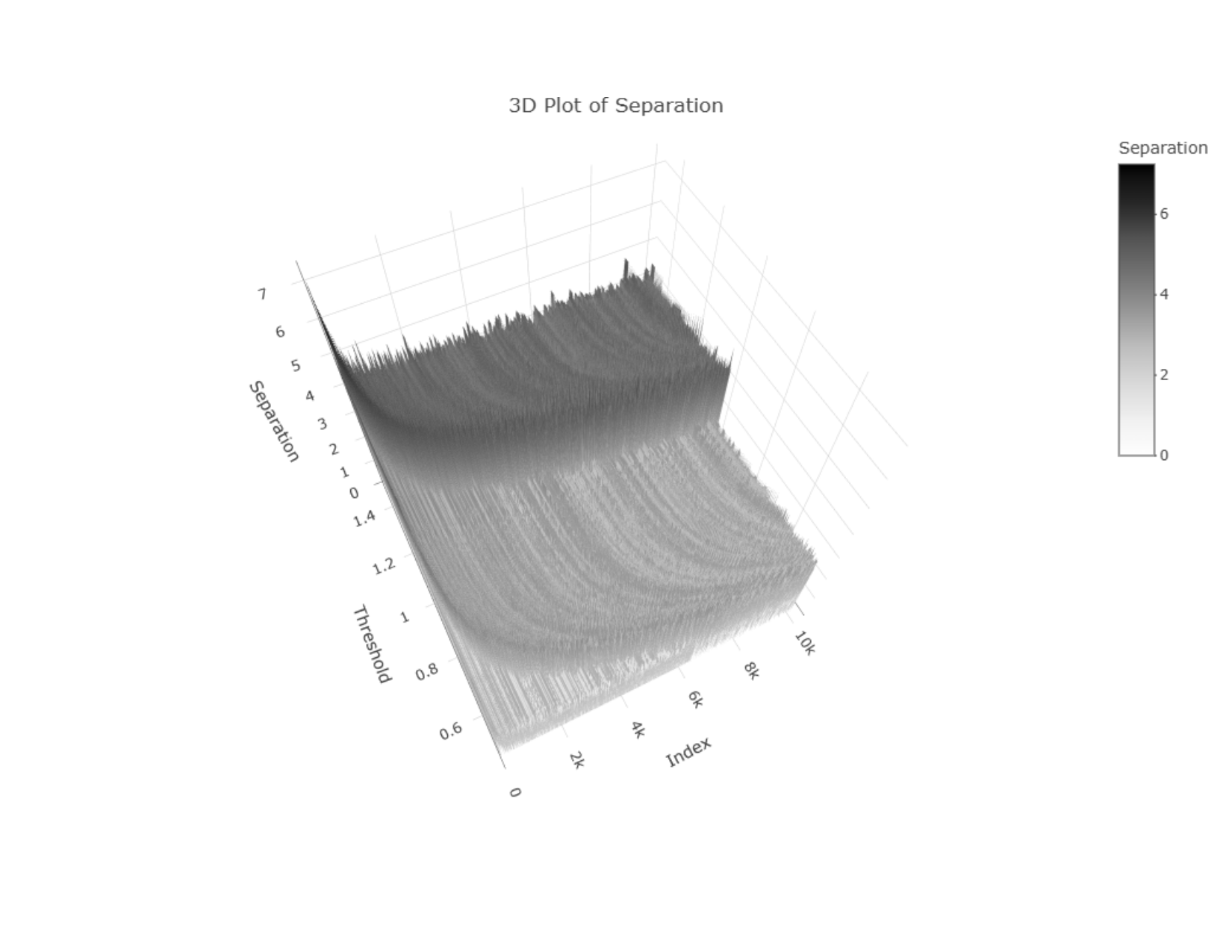}
    \caption{3D Plot of the $\log(s_{i+1,\alpha}/s_{i,\alpha})$ (Separation) against $i$ (Index) with $z_{\alpha/2} \in [0.5, 1.5]$ (Threshold).}
    \label{fig:sep3D}
\end{figure}

\section{Modifying the interpretation of the interval estimate and coverage probability}\label{sec:mod}

In Section \ref{sec:prob}, we observed that the coverage probability changes dramatically after the inclusion and exclusion points of the interval estimate. In other words, integers around these points may be considered atypical when the conventional definition of coverage probability is applied. However, because the sudden jumps in coverage probabilities around these points make the interval estimate unreliable, we consider modifying the interpretation of the interval estimate first by again examining Billingsley's interval estimate for $\omega$ in the vicinity of $10^{70}$, which is given by
\begin{align}\label{eqn:bill}
    [L_{1,\alpha^\star}(10^{70}), U_{1,\alpha^\star}(10^{70})] \approx [3, 7].
\end{align} 

Table~\ref{tab:ie} shows that two such points, namely, $L^{-1}_{1,\alpha^\star}(3)$ and $U^{-1}_{1,\alpha^\star}(7)$, occur relatively close to $10^{70}$, and that likely led to Billingsley's choice of $m = 10^{70}$. However, as $L^{-1}_{1,\alpha^\star}(3) < 10^{70}$, in the context of the coverage probability calculation, $[L_{1,\alpha^\star}(10^{70}), U_{1,\alpha^\star}(10^{70})] \approx [3.0535, 7.1115]$ is the same as that of $[4,7]$, which likely results in having the coverage probability lower than the nominal coverage probability of $1 - \alpha^\star \approx 0.6319$. To alleviate the problem, a more natural approach is to take into consideration the fractional part of the lower and upper bound of the interval estimate $[3.0535, 7.1115]$ using fuzzy set theory.

Here, suppose that we have an interval estimate $[L_{\lambda,\alpha}(m), U_{\lambda,\alpha}(m)]$ with the ceiling of the lower bound $\lceil L_{\lambda,\alpha}(m) \rceil$ and the floor of the upper bound $\lfloor U_{\lambda,\alpha}(m)\rfloor$. Also, let $p^L_{\lambda, \alpha}(m) = \lceil L_{\lambda,\alpha}(m) \rceil - L_{\lambda,\alpha}(m)$ and $p^U_{\lambda, \alpha}(m) = U_{\lambda,\alpha}(m) - \lfloor U_{\lambda,\alpha}(m) \rfloor$. Then, we interpret the interval estimate as follows.
\begin{itemize}
\item We assume that, $100p^L_{\lambda, \alpha}(m)\%$ of time, $\lfloor L_{\lambda,\alpha}(m) \rfloor$ is included in the interval estimate. Similarly, we assume that, $100p^U_{\lambda, \alpha}(m)\%$ of time, $\lceil U_{\lambda,\alpha}(m) \rceil$ is included in the interval estimate.
\end{itemize}
Also, for convenience, when $\lceil L_{\lambda,\alpha}(m) \rceil = \lfloor U_{\lambda,\alpha}(m)\rfloor + 1$, we may further assume that at least one of $\lceil L_{\lambda,\alpha}(m) \rceil$ and $\lfloor U_{\lambda,\alpha}(m)\rfloor$ is included in the interval estimate in practice. The above interpretation is more natural in the context of Billingsley's interval estimate, as the lower bound $L_{1,\alpha^\star}(10^{70})$ is only slightly greater than $3$ so that the interval estimate almost includes $3$, making the approximation in \eqref{eqn:bill} valid.

The interval estimate with the modified interpretation above, which we call the \emph{fuzzy} interval estimate (cf., \cite{geyer2005fuzzy}), also affects the coverage probability calculation. The new coverage probability calculation corresponding to \eqref{eqn:cr}, which we call the fuzzy coverage probability, is given by
\begin{align*}
    \tilde{p}_{m, j, \lambda, \alpha} = \frac{1}{2j+1}\sum_{t=m-j}^{m+j}\left[I^M_{\lambda, \alpha}(t) + p^L_{\lambda, \alpha}(t)I^L_{\lambda, \alpha}(t) + p^U_{\lambda, \alpha} I^U_{\lambda, \alpha}(t)\right],
\end{align*}
where 
\begin{align*}
I^M_{\lambda, \alpha}(t) &= I(\lceil L_{\lambda,\alpha}(t) \rceil \leq \lfloor U_{\lambda,\alpha}(t) \rfloor)I(\omega(t) \in [\lceil L_{\lambda,\alpha}(t) \rceil, \lfloor U_{\lambda,\alpha}(t) \rfloor]),\\ I^L_{\lambda, \alpha}(t) &= I(\omega(t) = \lfloor L_{\lambda,\alpha}(t) \rfloor),\\ 
I^U_{\lambda, \alpha}(t) &= I(\omega(t) = \lceil U_{\lambda,\alpha}(t) \rceil).
\end{align*}
Similarly, the fuzzy coverage probability calculation corresponding to \eqref{eqn:asympcr} is given by
\begin{align}\label{eqn:asympcrmod}
\tilde{p}^\star_{m,\lambda,\alpha} = \frac{1}{m}\sum_{d=1}^\infty \hat{\rho}_d(m)\left[I^M_{d, \lambda, \alpha}(m) + p^L_{\lambda, \alpha}(m)I^L_{d, \lambda, \alpha}(m) + p^U_{\lambda, \alpha} I^U_{d, \lambda, \alpha}(m) \right],
\end{align}
where
\begin{align*}
I^M_{d, \lambda, \alpha}(m) &= I(\lceil L_{\lambda,\alpha}(m) \rceil \leq \lfloor U_{\lambda,\alpha}(m) \rfloor)I(d \in [\lceil L_{\lambda,\alpha}(m) \rceil, \lfloor U_{\lambda,\alpha}(m) \rfloor]),\\ I^L_{d, \lambda, \alpha}(m) &= I(d = \lfloor L_{\lambda,\alpha}(m) \rfloor),\\ 
I^U_{d, \lambda, \alpha}(m) &= I(d = \lceil U_{\lambda,\alpha}(m) \rceil).
\end{align*}

To identify potentially promising interval estimates for $\omega$, let us compare the fuzzy coverage probabilities of five different interval estimates for $\omega$. The first three are $[L_{\lambda, \alpha}(m), U_{\lambda, \alpha}(m)]$ with $\lambda = 1/2$, $3/4$, and $1$ as each one of them has a favorable property. The fourth interval estimate, which is based on the asymptotic coverage probability result \eqref{eqn:asympcrmod}, is given in the form 
\begin{align}\label{eqn:ciasymp}
    [\ell_2(m) + 1 -\kappa_{\alpha}(m), \ell_2(m) + 1 +\kappa_{\alpha}(m)],
\end{align}
where $\kappa_{\alpha}(m)$ is set so that
\begin{align*}
    \tilde{p}^\star_{m,\lambda,\alpha} = 1 - \alpha
\end{align*}
when $L_{\lambda,\alpha}(m) = \ell_2(m) + 1 -\kappa_{\alpha}(m)$ and $U_{\lambda,\alpha}(m) = \ell_2(m) + 1 + \kappa_{\alpha}(m)$. We call the interval estimate \eqref{eqn:ciasymp} the Poisson interval estimate for $\omega$. Note that the Poisson interval estimate is centered at $\ell_2(m) + 1$, representing the mean of the shifted Poisson distribution with the probability mass function $\hat{\rho}_d(m)/m$. Finally, the fifth interval estimate can be derived from Corollary \ref{cor:m2} with $f_{\mu, \lambda} = \ell_2$, $f_{\sigma, \lambda} = \omega$, and $\lambda = 1$. To be more specific, because
\begin{align*}
\lim_{n \to \infty}\mathbb{P}_n\left(\left\{\left(\frac{\omega - \ell_2}{\sqrt{\omega}}\right)^2 \leq z_{\alpha/2}^2 \right\}\;\cap\; \left\{J \in \mathbb{N}_{3,n}\right\}\right) = 1-\alpha,
\end{align*}
by following \cite{politis2024studentization}, the lower and upper bound of the interval estimate for $\omega$ in the vicinity of $m$ are given by solving the quadratic equation
\begin{align*}
    \left(\frac{\omega(m) - \ell_2(m)}{\sqrt{\omega(m)}}\right)^2 = z_{\alpha/2}^2
\end{align*}
for $\omega(m)$. That is, for a sufficiently large $m$, the interval estimate for $\omega$ in the vicinity of $m$ is given by
\begin{align}\label{eqn:ciwilson}
    \left[\ell_2(m) + \frac{z_{\alpha/2}^2}{2} - z_{\alpha/2}\sqrt{\ell_2(m) + \frac{z_{\alpha/2}^2}{4}},
    \ell_2(m) + \frac{z_{\alpha/2}^2}{2} + z_{\alpha/2}\sqrt{\ell_2(m) + \frac{z_{\alpha/2}^2}{4}}\right].
\end{align}
As a similar interval estimate called the score interval for the binomial proportion is originally suggested by \cite{wilson1927probable}, we call \eqref{eqn:ciwilson} the score interval estimate of $\omega$. It is noted in \cite{politis2024studentization} that the score interval estimate may perform better than the other types of interval estimates based on standardization or variance stabilization (i.e., those with $\lambda = 1$ or $\lambda = 1/2$) for common distributions such as the Poisson distribution.

Figure \ref{fig:allcr} displays the fuzzy coverage probabilities for all the five interval estimates of $\omega$ for $m$ in the vicinity of $10^a$, $a = 5, 6, \ldots, 14$, at $\alpha = \alpha^\star$, corresponding to the nominal coverage probability of approximately $0.6319$. As expected, the fuzzy coverage probability eliminates sudden jumps in the coverage probabilities caused by the inclusion and exclusion points. Also, the fuzzy coverage probabilities of all the five interval estimates clearly exceed the nominal coverage probability. In other words, these interval estimates are called \emph{conservative}. The most conservative interval estimate is the score interval estimate, followed by the three interval estimates of the form $[L_{\lambda, \alpha}(m), U_{\lambda, \alpha}(m)]$ in \eqref{eqn:int11} with $\lambda \in \{1/2, 3/4, 1\}$. In fact, these three interval estimates have nearly identical performances. On the other hand, the Poisson interval estimate performs much better, with the fuzzy coverage probability of approximately $0.73$ is well within the limits given by Bradley's liberal criterion. 

\begin{figure}[htbp]
    \centering
\includegraphics[width=0.85\textwidth]{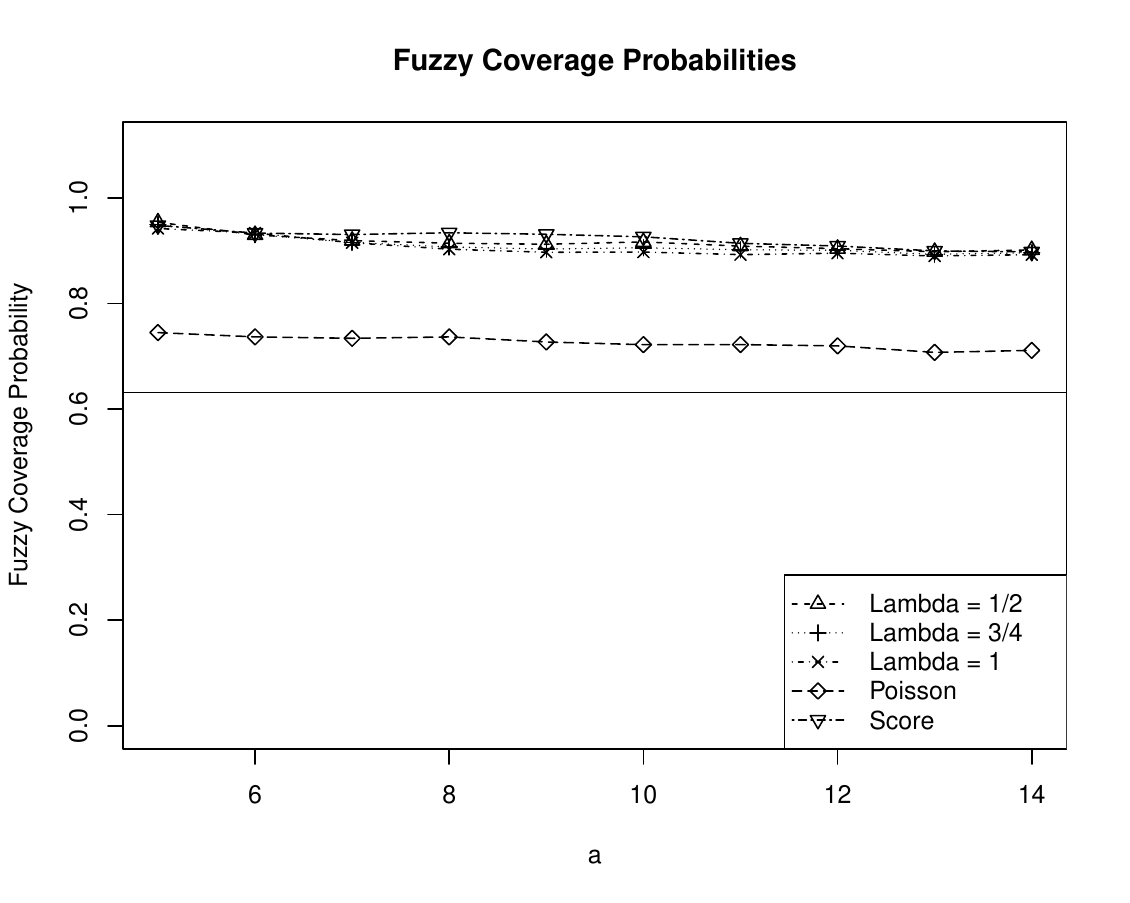}
    \caption{Plot of fuzzy coverage probabilities for the five interval estimates of $\omega$ in the vicinity of $m = 10^a$, $a = 5, 6, \ldots, 14$, at $\alpha = \alpha^\star$. The horizontal line indicates the nominal coverage probability.}
    \label{fig:allcr}
\end{figure}

Interestingly, the first three interval estimates of $\omega$ derived from \eqref{eqn:int11} with $\lambda \in \{1/2, 3/4, 1\}$ can be improved by adjusting the center. For example, by replacing the first factor $\ell_2(m)$ with $\ell_2(m) - 1$ in both the lower and upper bounds of the interval estimates (and not replacing $\sqrt{\ell_2(m)}$ with anything else), the coverage probabilities become much closer $1-\alpha^\star$ (see Figure \ref{fig:adjcr}). However, as the value of $-1$ is chosen somewhat arbitrarily, a more systematic statistical approach to adjust the center (i.e., mean) and scale (i.e., standard deviation) of $\omega$ may be more desirable. To accomplish our aim, in Section \ref{sec:train}, we further modify the interval estimates assuming that some information about $\omega$ for smaller $m$ is available for estimating these quantities. 

\begin{figure}[htbp]
    \centering
\includegraphics[width=0.85\textwidth]{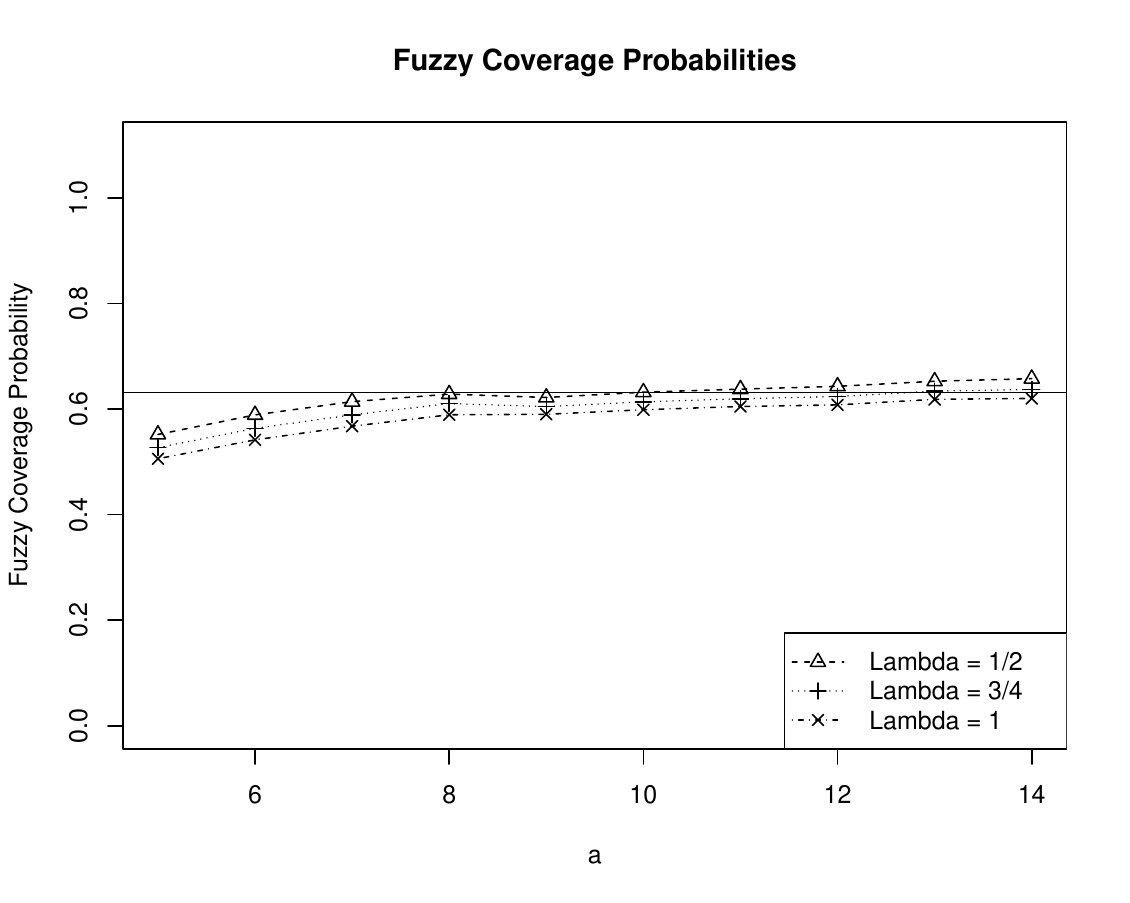}
    \caption{Plot of fuzzy coverage probabilities for the three adjusted interval estimates of $\omega$ in the vicinity of $m = 10^a$, $a = 5, 6, \ldots, 14$, at $\alpha = \alpha^\star$. The horizontal line indicates the nominal coverage probability.}
    \label{fig:adjcr}
\end{figure}

As a remark, it is also of interest to investigate cases where both $L_{\lambda,\alpha}(m)$ and $U_{\lambda,\alpha}(m)$ are (almost) integers. That way, the resulting interval estimate can be interpreted in a straightforward manner without relying on the fuzzy set theory. For example, take $z_{\alpha/2} = 2$ and $\ell_2(m) \approx u^2$ for some positive integer $u$. Then, when $\lambda = 1$, 
\begin{align*}
    [L_{1, \alpha}(m), U_{1, \alpha}(m)] \approx [u(u-2), u(u+2)],
\end{align*}
and when $\lambda = 1/2$,
\begin{align*}
    [L_{1/2, \alpha}(m), U_{1/2, \alpha}(m)] \approx [(u-1)^2, (u+1)^2],
\end{align*}
both of which have width $4u$ and their lower and upper bounds differ by just $1$ each. Moreover, the score interval estimate gives
\begin{align*}
    [L_{1/2, \alpha}(m), U_{1/2, \alpha}(m)] &\approx [u^2 - 2\sqrt{u^2+1} + 2, u^2 + 2\sqrt{u^2+1} + 2]\\
    &\approx [(u-1)^2 + 1, (u + 1)^2 + 1]
\end{align*}
for a large $u$, so that its lower and upper bound differ by just $1$ each (approximately) from the interval estimate with $\lambda = 1/2$.

Take, for example, $m = 10^{10^{3421914}}$ so that $\ell_2(m) \approx 2807^2 = 7879249$. Then, around $95\%$ ($95.45\%$ to be more precise) of the integers in the vicinity of $m = 10^{10^{3421914}}$ have $7873635$ to $7884863$ distinct prime divisors based on the interval estimate with $\lambda = 1$, $7873636$ to $7884864$ distinct prime divisors based on the interval estimate with $\lambda = 1/2$, and $7873637$ to $7884865$ distinct prime divisors based on the score interval estimate. In contrast, the Poisson interval estimate gives $7873637$ to $7884863$ distinct prime divisors, which are slightly narrower in width than the other three estimates. Thus, even for large numbers such as $10^{10^{3421914}}$, the Poisson interval estimate is considered superior. Lastly, these interval estimates also highlight the rareness of $10^{10^{3421914}}$ itself, which only has $2$ distinct prime divisors.

\section{Training the interval estimates}
\label{sec:train}

As the five interval estimates of $\omega$ examined in Section \ref{sec:mod} are conservative when $m$ is small (i.e., $m \in [10^5, 10^{14}]$), we investigate the possibility of statistically improving their performances by assuming that some information about $\omega$ for smaller $m$ is available. In particular, we show that the reliability of these interval estimates can be improved in terms of fuzzy coverage probability for $m \in [10^5, 10^{14}]$ by utilizing the estimated mean and standard deviation based on smoothed $\omega(m)$ for $m \in [10^4, 10^6]$ with an increment of $10^3$ between $10^4$ and $10^6$. The increment of $10^3$ was chosen to reduce the computational burden without losing the overall accuracy. For the choice of $\lambda$, we use $\lambda = 1/2$, $3/4$, and $1$ as before as these three choices provide some favorable properties. In addition, we modify the Poisson and score interval estimate using a similar approach.

Our systematic statistical approach relies on estimating the local adjustment functions $f_{\mu, \lambda}$ and $f_{\sigma, \lambda}$ (presented in Corollary \ref{cor:m2o}) in the vicinity of some given $m$. To do so, the following steps are taken, assuming $\lambda > 0$.
\begin{enumerate}
    \item Smooth $\omega^\lambda$ using a moving average filter around $m$, and call it $\tilde{\omega}_\lambda$. 
    \item Find a power $q_{\mu, \lambda}$ so that the linear correlation between $\tilde{\omega}_\lambda^{q_{\mu, \lambda}/\lambda}$ and $\ell_2$ is maximized.
    \item To obtain a local estimate of $f_{\mu, \lambda}$, find the (non-linear) least squares estimate of $(\beta_{0,\lambda}, \beta_{1,\lambda})$, denoted by $(\hat{\beta}_{0,\lambda}, \hat{\beta}_{1,\lambda})$, which minimizes the sum of squared errors between $\tilde{\omega}_\lambda$ and $(\beta_{0,\lambda} + \beta_{1,\lambda}\ell_2)^{\lambda/q_{\mu, \lambda}}$, for some range of $m$.
    \item Let $\hat{f}_{\mu, \lambda} = (\hat{\beta}_{0,\lambda} + \hat{\beta}_{1,\lambda}\ell_2)^{1/q_{\mu, \lambda}}$ be the local estimate of $f_{\mu, \lambda}$ around $m$. Then, estimate the standard deviation of $(\omega^\lambda - \hat{f}_{\mu, \lambda}^\lambda)/\lambda$ and call it $\tilde{\sigma}_\lambda$. 
    \item Find a power $q_{\sigma, \lambda}$ so that the linear correlation between $\tilde{\sigma}_\lambda^{q_{\sigma, \lambda}}$ and $\ell_2$ is maximized. 
    \item To obtain a local estimate of $f_{\sigma, \lambda}$, find the (non-linear) least squares estimate of $(\gamma_{0,\lambda}, \gamma_{1,\lambda})$, denoted by $(\hat{\gamma}_{0,\lambda}, \hat{\gamma}_{1,\lambda})$, which minimizes the sum of squared errors between $\tilde{\sigma}_\lambda$ and $(\gamma_{0,\lambda} + \gamma_{1,\lambda}\ell_2)^{1/q_{\sigma, \lambda}}$ for the same range of $m$ as before.
    \item Let $\hat{f}_{\sigma, \lambda} = (\hat{\gamma}_{0,\lambda} + \hat{\gamma}_{1,\lambda}\ell_2)^{1/q_{\sigma, \lambda}}$ be the local estimate of $f_{\sigma, \lambda}$ in the vicinity of $m$. Assuming that 
    \begin{align*}
        \frac{\omega^\lambda - \hat{f}_{\mu, \lambda}^\lambda}{\lambda \hat{f}_{\sigma, \lambda}}
    \end{align*}
    is approximately standard normal in the vicinity of $m$, the lower and upper bound for the $100(1-\alpha)\%$ interval estimate of $\omega$ in the vicinity of a sufficiently large $m$ is given by $[\hat{L}_{\lambda, \alpha}(m), \hat{U}_{\lambda, \alpha}(m)]$, where
    \begin{align*}
    \hat{L}_{\lambda, \alpha}(m) = ([\hat{f}_{\mu,\lambda}(m)]^{\lambda} - z_{\alpha/2}\lambda\hat{f}_{\sigma, \lambda}(m))^{1/\lambda}
    \end{align*}
    and
    \begin{align*}
    \hat{U}_{\lambda, \alpha}(m) = ([\hat{f}_{\mu,\lambda}(m)]^{\lambda} + z_{\alpha/2}\lambda\hat{f}_{\sigma, \lambda}(m))^{1/\lambda}.
    \end{align*}
\end{enumerate}

To illustrate the effectiveness of the algorithm above, in the first step, $\omega^\lambda$ is smoothed using the moving average filter with $j = 2000$ points before and after $m$, $m \in [10^4, 10^6]$ with an increment of $10^3$. That is,
\begin{align*}
\tilde{\omega}_\lambda(m) = \frac{1}{2j+1}\sum_{t=m-j}^{m+j} [\omega(t)]^\lambda
\end{align*}
is the smoothed $\omega^\lambda$ at $m$. Smoothing is an effective approach to estimate the underlying pattern in an unknown function especially when the function has a low signal-to-noise ratio. Now, Figure \ref{fig:smooth_means} shows that $\tilde{\omega}_\lambda^{1/\lambda}$ is almost perfectly linearly correlated with $\ell_2$ for $\lambda = 1/2$, $3/4$, and $1$ with the sample correlation coefficient $> 0.999$, suggesting that $\tilde{\omega}_\lambda$ is well approximated by $(\beta_{0, \lambda} + \beta_{1,\lambda}\ell_2)^\lambda$. That also implies that $q_{\mu, \lambda} = 1$ works well in this range. Using the least squares estimates $(\beta_{0,\lambda}^{(0)}, \beta_{1,\lambda}^{(0)})$ when regressing $\ell_2$ on $\tilde{\omega}_\lambda^{1/\lambda}$ as the initial estimates, the nonlinear least squares estimates of $(\beta_{0,\lambda}, \beta_{1,\lambda})$, denoted by $(\hat{\beta}_{0,\lambda}, \hat{\beta}_{1,\lambda})$, which minimize the sum of squared errors between $\tilde{\omega}_\lambda$ and $(\beta_{0, \lambda} + \beta_{1,\lambda}\ell_2)^\lambda$, are given in Table \ref{tab:lse}.

\begin{figure}[htbp]
    \centering
\includegraphics[width=1\textwidth]{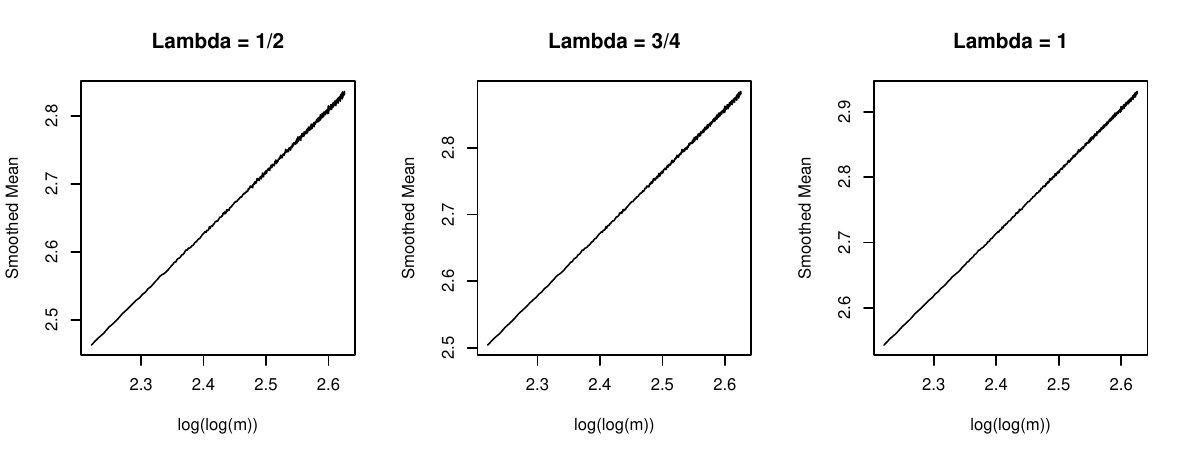}
    \caption{Plots of smoothed means $[\tilde{\omega}_\lambda(m)]^{1/\lambda}$ against $\ell_2(m)$, $m \in [10^4, 10^6]$, for $\lambda \in \{1/2, 3/4, 1\}$.}
    \label{fig:smooth_means}
\end{figure}

\begin{table}[htbp]
\centering
\begin{tabular}{|c|c|c|c|c|} 
 \hline
 $\lambda$ & $\hat{\beta}_{0,\lambda}$ & $\hat{\beta}_{1,\lambda}$ & $\hat{\gamma}_{0,\lambda}$ & $\hat{\gamma}_{1,\lambda}$ \\ 
 \hline
  $1/2$ & $0.4300$ & $0.9152$ & $-0.0109$ & $0.1498$ \\ 
 \hline
 $3/4$ & $0.4284$ & $0.9343$ & $0.0196$ & $0.2695$\\  
 \hline
 $1$ & $0.4270$ & $0.9527$ & $0.0499$ & $0.3677$\\   
 \hline 
\end{tabular}
\caption{$\hat{\beta}_{0,\lambda}$, $\hat{\beta}_{1,\lambda}$, $\hat{\gamma}_{0,\lambda}$, and $\hat{\gamma}_{1,\lambda}$ for $\lambda \in \{1/2, 3/4, 1\}$.}
\label{tab:lse}
\end{table}

Let $\hat{f}_{\mu, \lambda} = \hat{\beta}_{0,\lambda} + \hat{\beta}_{1,\lambda}\ell_2$. In the next step, the standard deviation of $(\omega^\lambda - \hat{f}_{\mu, \lambda}^\lambda)/\lambda$ is estimated by
\begin{align*}
\tilde{\sigma}_\lambda(m) = \left[\frac{1}{2j+1}\sum_{t=m-j}^{m+j} \left(\frac{[\omega(t)]^\lambda - [\hat{f}_{\mu, \lambda}(t)]^\lambda}{\lambda}\right)^2\right]^{1/2}.
\end{align*}

Figure \ref{fig:smooth_sds} shows that $\tilde{\sigma}_\lambda^{1/\lambda}$ is almost perfectly linearly correlated with $\ell_2$ as well (with the sample correlation coefficient $> 0.943$), suggesting that $\tilde{\sigma}_\lambda$ is well approximated by $(\gamma_{0, \lambda} + \gamma_{1,\lambda}\ell_2)^\lambda$ for some $(\gamma_{0, \lambda}, \gamma_{1, \lambda})$. In other words, $q_{\sigma, \lambda} = 1/\lambda$ works well in this range. Therefore, by taking the same step as before, the nonlinear least squares estimates of $(\gamma_{0,\lambda}, \gamma_{1,\lambda})$, denoted by $(\hat{\gamma}_{0,\lambda}, \hat{\gamma}_{1,\lambda})$, are also given in Table \ref{tab:lse}. Lastly, by letting $\hat{f}_{\sigma, \lambda} = (\hat{\gamma}_{0,\lambda} + \hat{\gamma}_{1,\lambda}\ell_2)^\lambda$, we obtain the interval estimate $[\hat{L}_{\lambda, \alpha}(m), \hat{U}_{\lambda, \alpha}(m)]$ for $\lambda = 1/2$, $3/4$, and $1$.

\begin{figure}[htbp]
    \centering
\includegraphics[width=1\textwidth]{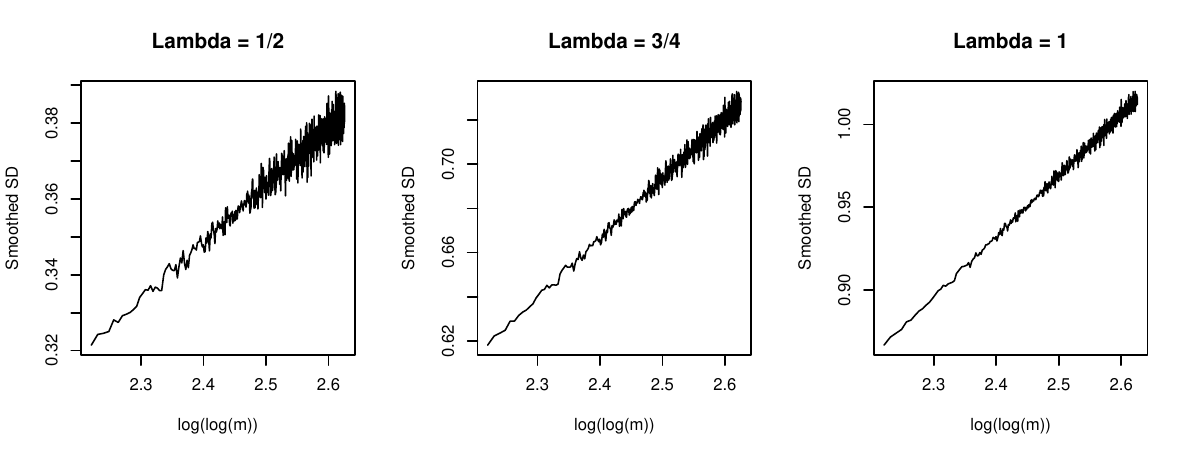}
    \caption{Plots of smoothed standard deviations (SDs) $\tilde{\sigma}_\lambda^{1/\lambda}$ against $\ell_2$, $m \in [10^4, 10^6]$, for $\lambda \in \{1/2, 3/4, 1\}$.}
    \label{fig:smooth_sds}
\end{figure}

In addition, to create a trained version of the score interval estimate, using the fact that $\tilde{\sigma}_1(m)$ is almost perfectly linearly correlated with $\tilde{\omega}_1(m)$ (with the sample correlation coefficient $> 0.990$), $\tilde{\sigma}^2_1(m)$ can also be well approximated by $\eta_0 + \eta_1\tilde{\omega}_1(m)$ for some $(\eta_0, \eta_1)$. Thus, the least squares estimates of $(\eta_0, \eta_1)$, denoted by $(\hat{\eta}_0, \hat{\eta}_1)$, are given by $(-0.1136, 0.3855)$. Then, assuming that
\begin{align*}
    \frac{\omega - \hat{f}_{\mu, \lambda}}{\hat{\eta}_0 + \hat{\eta}_1\omega}
\end{align*}
is approximately standard normal in the vicinity of $m$, we solve the quadratic equation 
\begin{align*}
    \left(\frac{\omega(m) - \hat{f}_{\mu, \lambda}(m)}{\hat{\eta}_0 + \hat{\eta}_1\omega(m)}\right)^2 = z_{\alpha/2}^2
\end{align*}
for $\omega(m)$. That is, the trained version of the score interval estimate for $\omega$ is given by $[\tilde{L}_{\lambda, \alpha}(m), \tilde{U}_{\lambda, \alpha}(m)]$, where
\begin{align*}
\tilde{L}_{\lambda, \alpha}(m) = \hat{f}_{\mu, \lambda}(m) + \frac{\hat{\eta}_1 z^2_{\alpha/2}}{2} - 
z_{\alpha/2}\sqrt{\hat{\eta}_0 + \hat{\eta}_1\hat{f}_{\mu, \lambda}(m) + \frac{\hat{\eta}^2_1 z^2_{\alpha/2}}{4}}
\end{align*}
and 
\begin{align*}
\tilde{U}_{\lambda, \alpha}(m) = \hat{f}_{\mu, \lambda}(m) + \frac{\hat{\eta}_1 z^2_{\alpha/2}}{2} + 
z_{\alpha/2}\sqrt{\hat{\eta}_0 + \hat{\eta}_1\hat{f}_{\mu, \lambda}(m) + \frac{\hat{\eta}^2_1 z^2_{\alpha/2}}{4}},
\end{align*}
assuming that $\hat{\eta}_0 + \hat{\eta}_1\hat{f}_{\mu, \lambda}(m) + \hat{\eta}^2_1 z^2_{\alpha/2}/4 \geq 0$. 

Furthermore, the Poisson interval estimate \eqref{eqn:ciasymp} may be trained as well. Let
\begin{align}\label{eqn:poispmftrain}
\hat{\rho}_{d, \beta_0, \beta_1}(m) = 
\frac{m\exp[-(\beta_0 + \beta_1\ell_2(m))][\beta_0 + \beta_1\ell_2(m)]^{d-1}}{(d-1)!}
\end{align}
and define $\tilde{p}^\star_{m,\lambda,\alpha, \beta_0, \beta_1}$ similarly as \eqref{eqn:asympcrmod} by replacing $\ell(m)$ with $\beta_0 + \beta_1\ell(m)$. Now, consider 
\begin{align*}
    [\hat{f}_{\mu, 1}(m) - \kappa_{\alpha, \hat{\beta}_0 - 1, \hat{\beta}_1}(m), \hat{f}_{\mu, 1}(m) + \kappa_{\alpha, \hat{\beta}_0 - 1, \hat{\beta}_1}(m)],
\end{align*}
where $\kappa_{\alpha, \hat{\beta}_0 - 1, \hat{\beta}_1}(m)$ is set so that that
\begin{align}\label{eqn:poisp}
\tilde{p}^\star_{m, \lambda, \alpha, \hat{\beta}_0 - 1, \hat{\beta}_1} = 1 - \alpha.
\end{align}
To see why $\hat{\beta}_0 - 1$ is chosen in \eqref{eqn:poisp}, note that the mean of the shifted Poisson distribution with the probability mass function $\hat{\rho}_{d, \hat{\beta}_0 - 1, \hat{\beta}_1}(m)/m$, $m \in \mathbb{N}$, is given by $\hat{f}_{\mu, 1}(m)$.

To assess the reliability of the trained interval estimates of $\omega$, both the in-sample (i.e., $m \leq 10^6$) and out-of-sample (i.e., $m > 10^6$) performances of the five trained interval estimates are examined in terms of fuzzy coverage probability for $m = 10^a$, $a = 5, 6, \ldots, 14$, at $\alpha = \alpha^\star$. Figure \ref{fig:modcrmod} indicates that the most reliable interval estimate in terms of the in-sample performance is the trained score interval estimate, but is closely followed by the trained Poisson interval estimate. In terms of the out-of-sample performance, sadly, as $m$ increases, the fuzzy coverage probability also increases for both of them, although the Poisson interval estimate has an almost exact fuzzy coverage probability for $m \in [10^7, 10^8]$. 

On the other hand, the other three trained interval estimates (with $\lambda = 1/2$, $3/4$, and $1$) perform nearly identically. Unfortunately, they all stay consistently well above the nominal coverage probability, making them conservative even after the training. Therefore, for these interval estimates, a simple centering adjustment considered in Section \ref{sec:mod} may be more effective especially for relatively small $m$. 

\begin{figure}[htbp]
    \centering
\includegraphics[width=1\textwidth]{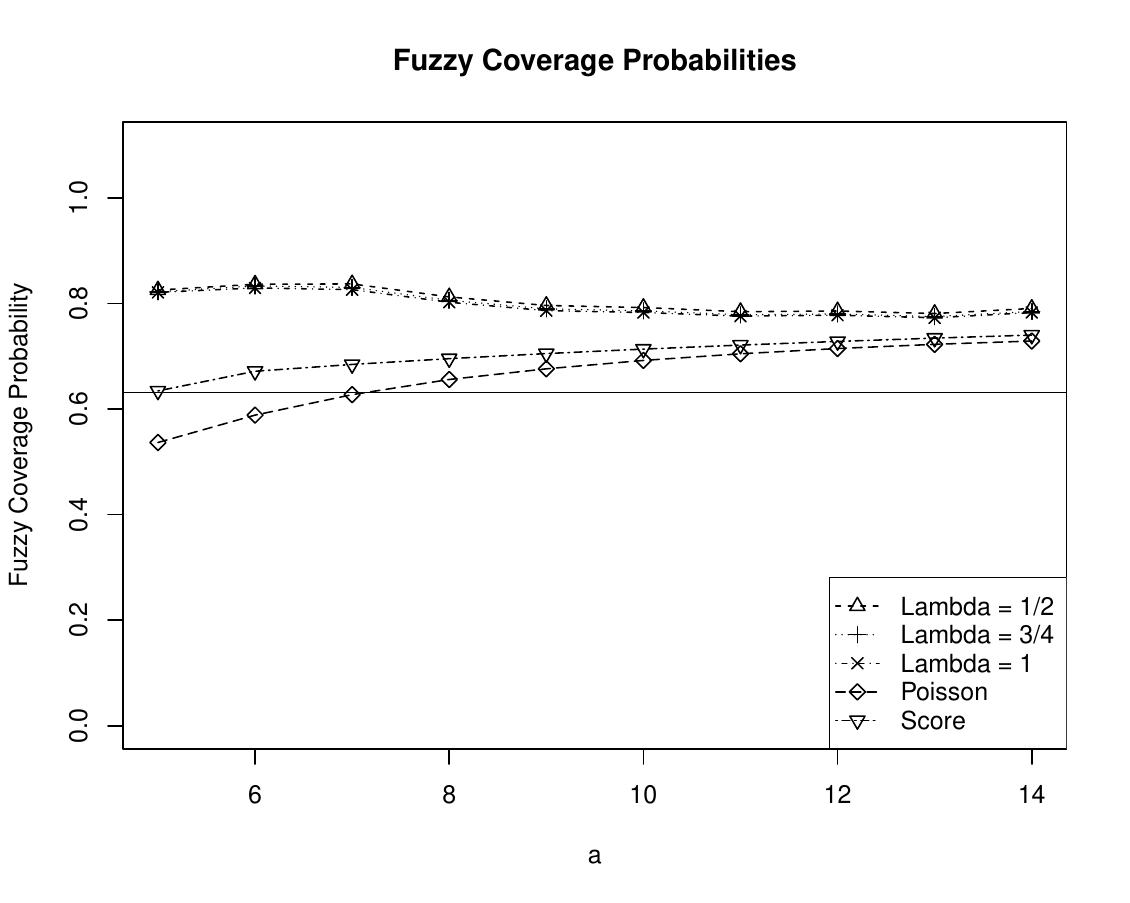}
    \caption{Plot of fuzzy coverage probabilities of the five trained interval estimates of $\omega$ in the vicinity of $m = 10^a$, $a = 5, 6, \ldots, 14$, at $\alpha = \alpha^\star$. The horizontal line indicates the nominal coverage probability.}
    \label{fig:modcrmod}
\end{figure}

\section{Theoretical results on the Erd\H{o}s-Pomerance theorem}\label{sec:ep}

Although we mainly examined transformations of the Erd\H{o}s-Kac theorem and their applications to interval estimation of $\omega$, the transformation technique is also applicable to other Erd\H{o}s-Kac-type theorems. Here, we discuss transformations of the Erd\H{o}s-Pomerance theorem \citep{erdos1985normal}. Let $\varphi$, defined as $\varphi(m) = m\prod_{p \vert m} (1 - p^{-1})$, $m \in \mathbb{N}$, denote Euler's totient function. Then, the Erd\H{o}s-Pomerance theorem shows the normality of the number of distinct prime divisors of Euler's totient function, $\omega(\varphi(m)) = (\omega \circ \varphi)(m)$, $m \in \mathbb{N}$, loosely speaking. Formally, it is given by
\begin{align}\label{eqn:thmep}
\frac{\omega \circ \varphi - [\ell_2(n)]^2/2}{[\ell_2(n)]^{3/2}/\sqrt{3}} \Rightarrow \Phi,
\end{align}
where we may set $\mathbb{M}_n = \mathbb{N}_{3,n}$ as before. Although \eqref{eqn:thmep}, directly taken from \cite{erdos1985normal}, contains $n$ in the expression, we may omit $n$ and conclude that
\begin{align}\label{eqn:thmep2}
\frac{\omega \circ \varphi - \ell_2^2/2}{\ell_2^{3/2}/\sqrt{3}} \Rightarrow \Phi
\end{align}
as well. To see why, by following \cite{billingsley1969central} for the similar derivation using the Erd\H{o}s-Kac theorem, it is equivalent to showing that 
\begin{align}\label{eqn:thmep3}
\lim_{n \to \infty}\frac{[\ell_2(n)]^2 - [\ell_2(m)]^2}{[\ell_2(n)]^{3/2}} = 0
\end{align}
if $\sqrt{n} < m \leq n$ in the case of the Erd\H{o}s-Pomerance theorem. By substituting $m$ with $\sqrt{n}$, a straightforward calculation shows that \eqref{eqn:thmep3} indeed holds, and hence \eqref{eqn:thmep2} follows as well.

Using \eqref{eqn:thmep2}, we now present results analogous to Theorems \ref{thm:mainomega}, \ref{thm:m1} and \ref{thm:m2general}. The proofs are omitted for these theorems as they are nearly identical to the ones presented in Sections \ref{sec:main} and \ref{sec:box}.
\begin{theorem}\label{thm:mainep}
Let $g$ be a (real-valued) function whose domain contains $[1-\delta, 1 + \delta]$ for some $\delta > 0$ with the assumptions that $g$ is differentiable at $1$ and that $g^\prime(1) \neq 0$. Then,
\begin{align*}
\frac{g(2\omega \circ \varphi/\ell_2^2) - g(1)}{2g^\prime(1)/\sqrt{3\ell_2}} \Rightarrow \Phi.
\end{align*}
\end{theorem}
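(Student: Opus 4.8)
The plan is to replicate the passage from Theorem~\ref{thm:main} to Theorem~\ref{thm:mainomega}, with the Erd\H{o}s-Kac standardization replaced by its Erd\H{o}s-Pomerance counterpart \eqref{eqn:thmep2}. Here the centering is $\ell_2^2/2$ and the standardizer is $\ell_2^{3/2}/\sqrt{3}$, so the natural argument of $g$ is $2\omega\circ\varphi/\ell_2^2$, which assumes the role that $\omega/\ell_2$ played in the original derivation.

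The only genuinely new ingredient is a Hardy-Ramanujan analog, namely $2\omega\circ\varphi/\ell_2^2 \Rightarrow 1$. First I would establish this directly from \eqref{eqn:thmep2} via Slutsky's theorem by writing
\[
\frac{2\omega\circ\varphi}{\ell_2^2} - 1 = \frac{\omega\circ\varphi - \ell_2^2/2}{\ell_2^{3/2}/\sqrt{3}}\cdot\frac{\ell_2^{3/2}/\sqrt{3}}{\ell_2^2/2},
\]
where the first factor converges to $\Phi$ while the second equals $(2/\sqrt{3})\ell_2^{-1/2} \to 0$, so the product $\Rightarrow 0$.

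Next I would reuse the key function $r$ introduced in Section~\ref{sec:main}. Since $r$ is continuous at $1$ with $r(1)=0$, the convergence above yields $r(2\omega\circ\varphi/\ell_2^2) \Rightarrow 0$ by the same continuous-mapping argument as in Lemma~\ref{lem:main}, working on the set $\{m \colon 2\omega\circ\varphi(m)/\ell_2^2(m) \in [1-\delta,1+\delta]\}$, which carries $\mathbb{P}_n$-mass tending to $1$. Abbreviating $W = 2\omega\circ\varphi/\ell_2^2$ and $Y_n = (\omega\circ\varphi - \ell_2^2/2)/(\ell_2^{3/2}/\sqrt{3})$, the identity $g(W)-g(1)=(g^\prime(1)+r(W))(W-1)$ combined with the elementary computation $(W-1)/(2/\sqrt{3\ell_2}) = Y_n$ gives
\[
\frac{g(2\omega\circ\varphi/\ell_2^2)-g(1)}{2g^\prime(1)/\sqrt{3\ell_2}} - Y_n = \frac{1}{g^\prime(1)}\,r(W)\,Y_n.
\]

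Finally, since $r(W)\Rightarrow 0$ and $Y_n \Rightarrow \Phi$, the right-hand side $\Rightarrow 0$ by the product form of Slutsky's theorem, and then $Y_n \Rightarrow \Phi$ forces the desired conclusion. The main obstacle, such as it is, is confined to the first step of verifying that the standardizer-to-center ratio vanishes; everything afterward is the mechanical delta-method bookkeeping already carried out in Theorems~\ref{thm:main} and~\ref{thm:m2general}, all valid under $\mathbb{P}_n$ because Slutsky's theorem continues to hold in that setting.
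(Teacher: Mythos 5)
Your proposal is correct and takes essentially the same approach as the paper: the paper omits this proof, describing it as nearly identical to the delta-method arguments of Sections~\ref{sec:main} and~\ref{sec:box}, and your write-up is exactly that argument --- the key function $r$, the bridging identity, and Slutsky's theorem under $\mathbb{P}_n$, now driven by \eqref{eqn:thmep2} instead of the Erd\H{o}s-Kac theorem. Your preliminary step deriving the normal-order statement $2\omega \circ \varphi/\ell_2^2 \Rightarrow 1$ directly from \eqref{eqn:thmep2} via Slutsky correctly supplies the Hardy-Ramanujan analog that the argument needs (and which Theorem~\ref{thm:main} cannot provide here, since $\omega \circ \varphi$ is not additive).
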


\begin{theorem}\label{thm:m1ep} 
When $\lambda \neq 0$,
\begin{align*}
\frac{(2\omega \circ \varphi)^\lambda - \ell_2^{2\lambda}}{2\lambda \ell_2^{2\lambda - 1/2}/\sqrt{3}} \Rightarrow \Phi,
\end{align*}
and when $\lambda = 0$,
\begin{align*}
\frac{\log(2\omega \circ \varphi) - 2\log\ell_2}{2/\sqrt{3\ell_2}} \Rightarrow \Phi.
\end{align*}
\end{theorem}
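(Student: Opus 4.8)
The plan is to specialize Theorem~\ref{thm:mainep} to the Box--Cox transformation $g = y_\lambda$, exactly mirroring the way Theorem~\ref{thm:m1} is obtained from Theorem~\ref{thm:mainomega}. Since $y_\lambda$ has domain $(0,\infty)$, its domain contains $[1-\delta,1+\delta]$ for every $\delta \in (0,1)$; moreover $y_\lambda$ is differentiable at $1$ with $y_\lambda(1) = 0$ and $y_\lambda'(1) = 1 \neq 0$ in both the $\lambda \neq 0$ and $\lambda = 0$ cases (for $\lambda \neq 0$, $y_\lambda'(x) = x^{\lambda-1}$, and for $\lambda = 0$, $y_0'(x) = 1/x$). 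Hence the hypotheses of Theorem~\ref{thm:mainep} are satisfied, and the convergence holds with $\mathbb{M}_n = \mathbb{N}_{3,n}$ just as claimed.

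First I would substitute $g(1) = 0$ and $g'(1) = 1$ into the conclusion of Theorem~\ref{thm:mainep}, reducing the standardized quantity to $\tfrac{1}{2}\sqrt{3\ell_2}\, y_\lambda(2\omega\circ\varphi/\ell_2^2)$. For $\lambda \neq 0$, using $(2\omega\circ\varphi/\ell_2^2)^\lambda = (2\omega\circ\varphi)^\lambda \ell_2^{-2\lambda}$ gives
\begin{align*}
\frac{y_\lambda(2\omega\circ\varphi/\ell_2^2)}{2/\sqrt{3\ell_2}}
= \frac{(2\omega\circ\varphi)^\lambda - \ell_2^{2\lambda}}{\lambda\ell_2^{2\lambda}} \cdot \frac{\sqrt{3}\,\ell_2^{1/2}}{2}
= \frac{(2\omega\circ\varphi)^\lambda - \ell_2^{2\lambda}}{2\lambda\ell_2^{2\lambda - 1/2}/\sqrt{3}},
\end{align*}
which is the displayed left-hand side for $\lambda \neq 0$. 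For $\lambda = 0$, the identity $\log(2\omega\circ\varphi/\ell_2^2) = \log(2\omega\circ\varphi) - 2\log\ell_2$ turns the same expression into $[\log(2\omega\circ\varphi) - 2\log\ell_2]/(2/\sqrt{3\ell_2})$, matching the second display. Thus both assertions follow directly from Theorem~\ref{thm:mainep}.

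There is no real obstacle here beyond bookkeeping: the substantive analytic content --- the delta-method argument under $\mathbb{P}_n$ together with the normal order $2\omega\circ\varphi/\ell_2^2 \Rightarrow 1$ needed to keep the argument of $g$ inside $[1-\delta,1+\delta]$ with probability tending to $1$ --- is already absorbed into Theorem~\ref{thm:mainep}. The only point requiring a line of care is confirming $y_\lambda'(1) = 1$ uniformly across the two cases so that the factor $g'(1)$ cancels cleanly; once that is noted, the remainder is the routine algebraic simplification above, which is precisely why the authors omit it.
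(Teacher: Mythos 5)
Your proposal is correct and follows exactly the route the paper intends: the paper states that the proof of Theorem~\ref{thm:m1ep} is omitted because it is nearly identical to that of Theorem~\ref{thm:m1}, which is obtained by applying the delta-method result (there Theorem~\ref{thm:mainomega}, here Theorem~\ref{thm:mainep}) with $g = y_\lambda$, noting $y_\lambda(1)=0$ and $y_\lambda'(1)=1$ in both cases. Your algebraic simplification of $\tfrac{1}{2}\sqrt{3\ell_2}\,y_\lambda(2\omega\circ\varphi/\ell_2^2)$ to the two displayed forms is accurate, so nothing is missing.
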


\begin{theorem}\label{thm:m2ep} 
Let $f_{\mu} = \ell_2^2 + o(\ell_2^{3/2})\mbox{ pp}$ and $f_{\sigma} = \ell_2^{3/2} + o(\ell_2^{3/2})\mbox{ pp}$ be some local adjustment functions. Also, let $g$ be a (real-valued) function whose domain contains $[1-\delta, 1 + \delta]$ for some $\delta > 0$ with the assumptions that $g$ is differentiable at $1$ and that $g^\prime(1) \neq 0$. Then,
\begin{align*}
\frac{g(2\omega \circ \varphi/f_{\mu}) - g(1)}{2g^\prime(1)(f_\sigma/f_{\mu})/\sqrt{3}} \Rightarrow \Phi.
\end{align*}
\end{theorem}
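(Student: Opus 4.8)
The plan is to mirror the proof of Theorem~\ref{thm:m2general}, now treating $2\omega\circ\varphi$ as the basic object so that its normal order is $\ell_2^2$ and its standardized fluctuations are governed by \eqref{eqn:thmep2}. First I would record the algebraic bridge identity. Writing $x = 2\omega\circ\varphi/f_\mu$, so that $x-1 = (2\omega\circ\varphi - f_\mu)/f_\mu$, the defining relation $g(x) - g(1) = (x-1)(r(x) + g^\prime(1))$ gives
\begin{align*}
\frac{g(2\omega\circ\varphi/f_\mu) - g(1)}{2g^\prime(1)(f_\sigma/f_\mu)/\sqrt{3}} = \frac{\sqrt{3}(2\omega\circ\varphi - f_\mu)}{2f_\sigma}\left(1 + \frac{r(2\omega\circ\varphi/f_\mu)}{g^\prime(1)}\right).
\end{align*}
The strategy is then to show that the leading factor $W := \sqrt{3}(2\omega\circ\varphi - f_\mu)/(2f_\sigma)$ satisfies $W \Rightarrow \Phi$ while the correction factor in parentheses converges to $1$, so that Slutsky's theorem delivers the claim.

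The key step is the Erd\H{o}s--Pomerance analogue of the Hardy--Ramanujan normal-order statement, namely $2\omega\circ\varphi/f_\mu \Rightarrow 1$. I would extract this directly from \eqref{eqn:thmep2}: since
\begin{align*}
\frac{2\omega\circ\varphi}{\ell_2^2} - 1 = \frac{2}{\sqrt{3}}\,\ell_2^{-1/2}\left(\frac{\omega\circ\varphi - \ell_2^2/2}{\ell_2^{3/2}/\sqrt{3}}\right)
\end{align*}
is a product of $\ell_2^{-1/2} \to 0$ with a factor converging to $\Phi$, Slutsky's theorem gives $2\omega\circ\varphi/\ell_2^2 \Rightarrow 1$; and because $f_\mu = \ell_2^2 + o(\ell_2^{3/2})\mbox{ pp}$ forces $\ell_2^2/f_\mu \Rightarrow 1$, multiplying the two yields $2\omega\circ\varphi/f_\mu \Rightarrow 1$. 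Continuity of $r$ at $1$ together with $r(1) = 0$ then gives $r(2\omega\circ\varphi/f_\mu) \Rightarrow 0$, exactly as in Lemma~\ref{lem:main}, so the parenthetical correction factor in the bridge identity converges to $1$.

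It remains to show $W \Rightarrow \Phi$. Substituting $f_\mu = \ell_2^2 + o(\ell_2^{3/2})\mbox{ pp}$ and $f_\sigma = \ell_2^{3/2} + o(\ell_2^{3/2})\mbox{ pp}$ and mirroring the final display of the proof of Theorem~\ref{thm:m2general}, I would write
\begin{align*}
W = \left(\frac{\ell_2^{3/2}}{\ell_2^{3/2} + o(\ell_2^{3/2})}\right)\left(\frac{\omega\circ\varphi - \ell_2^2/2}{\ell_2^{3/2}/\sqrt{3}} - \frac{\sqrt{3}\,o(\ell_2^{3/2})}{2\ell_2^{3/2}}\right)\mbox{ pp}.
\end{align*}
Here the first factor converges to $1$, the standardized term in the second factor converges to $\Phi$ by \eqref{eqn:thmep2}, and the residual $o(\ell_2^{3/2})/\ell_2^{3/2}$ converges to $0$; a final application of Slutsky's theorem gives $W \Rightarrow \Phi$, and hence the theorem.

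I expect the only genuine subtlety to be the normal-order step $2\omega\circ\varphi/f_\mu \Rightarrow 1$: unlike the Erd\H{o}s--Kac setting, no separately stated Hardy--Ramanujan-type result is available to cite, so it must be recovered from \eqref{eqn:thmep2} itself. What makes this succeed is that the standardizer $\ell_2^{3/2}$ is of strictly smaller order than the centering $\ell_2^2$, their ratio being $\ell_2^{-1/2} \to 0$, the same vanishing ratio as $\sqrt{\ell_2}/\ell_2$ in the Erd\H{o}s--Kac case; consequently the template transfers essentially verbatim once the constants $2$ and $\sqrt{3}$ are tracked. The remaining manipulations with the $o(\ell_2^{3/2})\mbox{ pp}$ adjustment functions are routine Slutsky bookkeeping.
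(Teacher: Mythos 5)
Your proposal is correct and takes essentially the same approach as the paper: the paper omits the proof of Theorem~\ref{thm:m2ep} on the grounds that it is nearly identical to that of Theorem~\ref{thm:m2general}, and your argument is precisely that proof transplanted to the Erd\H{o}s--Pomerance setting (your multiplicative bridge identity is just the paper's additive identity rearranged, and the final Slutsky decomposition of $W$ matches the paper's last display). Your one added detail---deriving the normal-order statement $2\omega\circ\varphi/\ell_2^2 \Rightarrow 1$ from \eqref{eqn:thmep2} itself via Slutsky's theorem, since no Hardy--Ramanujan analogue for $\omega\circ\varphi$ is stated in the paper---is exactly the step needed to make the ``nearly identical'' claim rigorous, and it is sound.
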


Although Theorem \ref{thm:m1ep} is nearly identical to Theorem \ref{thm:m1}, Theorem \ref{thm:m1ep} indicates that $\lambda = 1/4$ is the variance-stabilizing power transformation for the Erd\H{o}s-Pomerance theorem. In particular,
\begin{align*}
2\sqrt{3}[(2\omega \circ \varphi)^{1/4} - \sqrt{\ell_2}] \Rightarrow \Phi.
\end{align*}
On the other hand, the optimal width of the interval estimate for $\omega \circ \varphi$ in the vicinity of $m$ is still achieved asymptotically at $\lambda = 3/4$ as $\ell(m) \to \infty$.

\section{Discussion and future work}
\label{sec:conc}

The Erd\H{o}s-Kac theorem, which reveals a surprising connection between the number of distinct prime divisors $\omega$ and the normal distribution, may be described from the probabilistic point of view. That naturally allows us to transform the theorem via standard methods in probability theory and mathematical statistics such as the delta method and Slutsky's theorem, and classical results in number theory such as Mertens' second theorem and the Hardy-Ramanujan theorem. Furthermore, when the Box-Cox transformation is considered, the transformed Erd\H{o}s-Kac theorem possesses favorable properties such as variance stabilization at $\lambda = 1/2$ and asymptotically optimal width of the resulting interval estimate for $\omega$ at $\lambda = 3/4$. In addition, a direct application of these theoretical results to the Erd\H{o}s-Pomerance theorem shows that variance stabilization is achieved at $\lambda = 1/4$ instead, although the asymptotically optimal width remains at $\lambda = 3/4$. Lastly, it is noted that all the theoretical results follow even if we replace $\omega$ with $\Omega$, which represents the total number of prime divisors with multiplicity counted \citep{billingsley1974probability,erdos1985normal}.

Despite its mathematical beauty, the interval estimates of $\omega$ derived from the Erd\H{o}s-Kac theorem and its transformations display significant jumps in conventional coverage probabilities at the inclusion and exclusion points. To overcome the issue, a fuzzy interval estimate interpretation is suggested, noting the discrete nature of $\omega$. The resulting fuzzy coverage probabilities show no significant jumps, although the fuzzy coverage probabilities themselves tends to be significantly higher than the nominal coverage probability for relatively small positive integers, revealing the conservative nature of such interval estimates.

To alleviate the conservativeness of these interval estimates, local adjustment functions are introduced in the numerator of denominator of the Box-Cox-transformed Erd\H{o}s-Kac theorem. Although their asymptotic properties are straightforward, it is a nontrivial task to estimate these functions. Nevertheless, our results indicate the possibility of statistically estimating these functions locally using linear transformations of $\ell_2$ or $\omega$ based on the available information about $\omega$ after a suitable power transformation. Although the trained interval estimates vary in their performances in terms of fuzzy coverage probability, the trained score and Poisson interval estimate seem particularly reliable as their fuzzy coverage probabilities stay only slightly above the nominal coverage probability in the out-of-sample case. In other words, these two interval estimates may provide relatively reliable out-of-sample interval estimates of $\omega$ in the short run. On the other hand, for the Box-Cox-transformed interval estimates, a simple centering adjustment may provide reasonable out-of-sample interval estimates of $\omega$ in the short run.

Although the main purpose of the article is on the transformation of the Erd\H{o}s-Kac theorem, the Poisson interval estimate of $\omega$ is found to be only somewhat conservative even without any training. In addition, another advantage of the Poisson interval estimate, which is that it tends to be narrower than the other interval estimates, is observed when $m \approx 10^{10^{3421914}}$ with $z_{\alpha/2} = 2$ is considered. Therefore, when constructing an interval estimate of $\omega$ in the vicinity of some $m$, it may be best to use the Poisson interval estimate with or without training, especially when $m$ is relatively small. To illustrate its advantage, for $m \approx 10^{70}$ with $\alpha = \alpha^\star$, the Poisson interval estimate gives $[4.5169, 7.6482]$, whose width is about $3.1313$. On the other hand, Billingsley's interval estimate of $[3.0535, 7.1115]$ has the width of about $4.0580$, which is noticeably larger than that of the Poisson interval estimate. Considering the fact that the interval estimate with $\lambda = 1$ tends to be highly conservative up to $m \approx 10^{14}$, it is likely that $[3.0535, 7.1115]$ is also too wide in terms of fuzzy coverage probability, making the interval estimate conservative. On the other hand, the Poisson interval estimate of $[4.5169, 7.6482]$ is probably more reliable.

Possible avenues for future work include applications of the transformation techniques to other results in probabilistic number theory, further generalizations of the transformed Erd\H{o}s-Kac theorem, efficient and accurate estimation of the local adjustment functions, and more research on the relationship between the Poisson distribution and $\omega$. By applying common techniques in probability theory and mathematical statistics such as the delta method and Slutsky's theorem to other Erd\H{o}s-Kac-type theorems, it may be possible to simplify their expressions through variance stabilization. Also, it is possible to generalize the transformed the Erd\H{o}s-Kac theorem and Erd\H{o}s-Pomerance theorem by applying techniques used in \cite{loyd2023dynamical}. Furthermore, by combining an appropriate transformation with efficient training of the local adjustment functions, the resulting interval estimates may become more useful for relatively small integers where the asymptotic results do not necessarily apply. Lastly, the fact that the Poisson interval estimate tends to outperform the other normal-based interval estimates derived from the Erd\H{o}s-Kac theorem and its modifications indicates that the Poisson approximation of $\omega$ seems to be a more natural choice than the normal approximation. Thus, a further investigation on the connection between the Poisson distribution and $\omega$ (e.g., \cite{harper2009two}) may lead to a deeper understanding of $\omega$ itself.

\section*{Acknowledgments}
The author would like to thank \'{A}rp\'{a}d B\'{e}nyi and Amites Sarkar for their helpful comments. 

\bibliographystyle{apalike} 
\bibliography{cas-refs}

\end{document}